\begin{document}
\title{Approximate public-signal correlated equilibria for nonzero-sum differential games\footnote{This work was funded by the Russian Science Foundation (project no.~17-11-01093).}}
\author{Yurii Averboukh\thanks{Krasovskii Institute of Mathematics and Mechanics,\ \ \texttt{e-mail: ayv@imm.uran.ru, averboukh@gmail.com}}\hspace{6pt}{}\thanks{Ural Federal University}}

\date{}
\maketitle

\begin{abstract}
	We construct an approximate public-signal correlated  equilibrium for a nonzero-sum differential game in the class of stochastic strategies with memory. The construction is based on a solution of an auxiliary nonzero-sum continuous-time stochastic game. 	This class of games includes stochastic differential games and continuous-time Markov games. Moreover, we study the limit of approximate equilibrium outcomes in the case when the auxiliary stochastic games tend to the original deterministic one. We show that it  lies in the convex hull of the set of equilibrium values provided by deterministic  punishment strategies. 
\keywords{nonzero-sum differential games, approximate equilibrium, public-signal correlated strategies, control with model}
\msccode{91A23, 91A10, 49N70, 91A28}\end{abstract}

\section{Introduction}\label{sect:itroduction}
The paper is concerned with approximate equilibria for two player  differential games. This problem is strongly connected with the theory of  system of Hamilton--Jacobi PDEs. It is proved that if the system of Hamilton--Jacobi PDEs admits a classical solution, then this solution is a Nash value for the corresponding nonzero-sum differential game \cite{Friedman}. Moreover, in this case   one can construct a feedback Nash equilibrium. This property is preserved in some cases when the system of Hamilton--Jacobi PDEs admits only generalized solution (see \cite{Bressan}, \cite{Cardal_Plascatz}). However, up to now there is no existence theorem for the system of Hamilton--Jacobi PDEs. Moreover, Bressan and Shen showed the ill-posedness of  this system~\cite{Bressan_Shen_IJGT}.

A different way to construct Nash equilibria for the nonzero-sum differential game is based on so called punishment techniques. This approach guarantees the existence of Nash equilibria \cite{Cleimenov}, \cite{Kononenko}, \cite{Tolwinski}. Using punishment technique, one can characterize the set of all Nash equilibrium values in the class of deterministic strategies~\cite{Chistyakov},~\cite{Cleimenov},~\cite{Tolwinski}. Certainly, this set comprises the values corresponding to solutions of Hamilton--Jacobi PDEs. However, within the punishment approach one can construct equilibria  those are realized only by incredible threats.  A natural way to select a proper Nash equilibrium is to restrict the attention to the  so called Nash--Pareto solution of nonzero-sum games \cite{Cleimenov}. Unfortunately, this solution concept does not satisfy time consistency principle.   

As it was mentioned in \cite{Bressan_Shen_IJGT}, there are two possibilities to overcome these difficulties. The first way is to introduce some noise i.e. replace the original deterministic system with the stochastic system. The second way is to introduce a cooperation.
In the paper we try to follow both ways. We assume that there exist an auxiliary continuous-time stochastic game with the dynamics close to the original deterministic one and a pair of functions satisfying a stability condition for the auxiliary  game. We use this pair of functions and a slight cooperation to construct an approximate equilibrium in the original differential game. Note that the stability condition is always satisfied if the pair of functions solves the system of Hamilton--Jacobi PDEs for the auxiliary stochastic game. 

We allow auxiliary continuous-time stochastic games with  dynamics   given by  generators of the L\'{e}vy-Khintchine type (see \cite{Kolokoltsov} for the general theory of generators of the L\'{e}vy-Khintchine type).  This class of games includes stochastic differential games and continuous-time Markov games (i.e. games with the dynamics given by a continuous-time Markov chain). Note that both aforementioned cases are well studied. 
The stochastic differential games were studied using a system of parabolic PDEs in \cite{Bensoussan_Frehse_1984}, \cite{Bensoussan_Frehse_2000}, \cite{Friedman_stochastic}, \cite{Friedman_stoch_book}, \cite{Hamadene_Manucci}, \cite{Manucci_1}, \cite{Manucci_2}. Another  approach based on forward-backward stochastic differential equations   was developed for the stochastic differential games in~\cite{BSDE1},~\cite{Hamadene_Mu},~\cite{BSDE2}. Punishment strategies were studied for this type of game in \cite{Cardal_stoch}. Note that approaches based on punishment and forward-backward stochastic differential equations are equivalent  (see \cite{Rainer_2007}).
The nonzero-sum Markov games were  studied in~\cite{Levy}. %Thus, the nonzero-sum continuous time stochastic games with  dynamics given by  generators of the L\'{e}vy–Khintchine type supply a wide class of well-studied auxiliary games.

We assume that the players observe   the state of the auxiliary stochastic system  and can use a memory. This leads to  public-signal correlated equilibria in the class of strategies with memory. The auxiliary stochastic game plays the role of a model of the original nonzero-sum differential game. The control with model strategies were first proposed for zero-sum differential games in \cite{NN_PDG}. They were applied to construct approximate equilibria in nonzero-sum differential games (see~\cite{averboukh_JCDS}). In those papers only deterministic models were allowed. The stochastic models for zero-sum differential games were developed in \cite{averboukh_SIAM}, \cite{krasovskii_kotelnokova_unification}, \cite{krasovskii_kotelnokova_guide}, \cite{krasovskii_kotelnokova_intercept}. In the paper we extend the mentioned results to the case of nonzero-sum differential games. 

Note that, although the construction of  public-signal strategies proposed in the paper goes back to the ideas of punishment strategies, it  allows to design approximate equilibria based on the solution of the system of PDEs. 
We especially put an attention to the case when the model of the game is determined by a stochastic differential equation. In this case it is shown that one can construct an approximate equailibrium based on a strong solution of the system of parabolic PDEs.

Additionally, we examine  the limit of approximate equilibrium outcomes  when the model stochastic games tend to the original deterministic one. It is shown that any limit equilibrium outcome  (that is a pair of numbers) lies in the convex hull of the set of equilibrium values provided by deterministic  punishment strategies.

The paper is organized as follows. In Section \ref{sect:preliminaries} we introduce the concept of public-signal correlated  approximate equilibria. The next section is concerned with the formulation of the main result. It states that, given an auxiliary nonzero-sum   continuous-time stochastic game and a pair of continuous functions of position satisfying a stability condition for this auxiliary  game, one can construct an approximate equilibrium for the original game. In Section \ref{sect:sys_HJ} we examine the link between the systems of Hamilton--Jacobi PDEs and the proposed stability condition. First, we consider the case of the auxiliary systems of the general form. It is proved that if a pair of functions is a classical solution of the system of Hamilton-Jacobi PDEs, then it satisfies the stability condition. Further, we study the special case when the auxiliary system is determined by a stochastic differential equation. In this case  we show that if the pair of function is a generalized solution of the system of Hamilton-Jacobi PDEs, then  the stability condition is satisfied. Moreover, we present the example demonstrating that the class of functions satisfying the stability condition is not exhausted by the solutions of the system of Hamilton-Jacobi PDEs even in the case of models given by stochastic differential equations. The limit of the approximate equilibrium outcomes is studied in Section~\ref{sect:limit}.   The other sections are concerned with the proof of the main result. In Section~\ref{sect:construction}, given a pair of functions satisfying stability condition for the auxiliary continuous-time stochastic game, we construct a profile of public-signal correlated  strategies. Its properties are examined in Section~\ref{sect:extremal}. Finally, Section \ref{sect:proof_of_the_main_result} is devoted to the proof of the main result. To this end we show  that the  profile of public-signal correlated strategies designed in Section~\ref{sect:construction} is an approximate equilibrium.

\section{Definitions and assumptions}\label{sect:preliminaries}
We study the nonzero-sum differential game with the dynamics given by 
\begin{equation}
\label{eq:sys_original}
\dot{x}=f_1(t,x,u)+f_2(t,x,v),\ \ t\in [0,T], x\in \rd,\ \ u\in U,\ \ v\in V.
\end{equation} Here $u$ (respectively, $v$) denotes the control of the first (respectively, second) player. We assume that the  purpose of the $i$-th player is to maximize the terminal payoff $\gamma_i(x(T))$. Below we assume that $U$ and $V$ are metric compacts. To simplify notation we will also use the following designation:
\begin{equation}\label{intro:f}
f(t,x,u,v)\triangleq f_1(t,x,u)+f_2(t,x,v). 
\end{equation}

To define the notion of approximate public-signal correlated equilibrium let us introduce some auxiliary definitions.

If $\Upsilon$ is a metric space, then denote by $\mathcal{B}(\Upsilon)$ the corresponding Borel $\sigma$-algebra. Set  $\mathbb{F}_{s,r}\triangleq \mathcal{B}(C([s,r];\rd))$. 
Let $\mathcal{D}$ be a linear subspace  of $C^2(\rd)$ containing $C_b^2(\rd)$, linear functions $x\mapsto \langle a,x\rangle$ and  quadratic functions $x\mapsto \|x-a\|^2$. 	

Informally, the concept of public-signal correlated strategies can be described as follows. We assume that both players at each time observe the random signal that is produced by an external device. Below  this information will be a forecasting of a state of a game being a stochastic model of the original game. The players form their control using this shared information and the history of the game.  

This idea can be formalized in the following way.

\begin{definition}\label{def:strategy} A 6-tuple $\mathfrak{w}=(\Omega,\mathcal{F},\{\mathcal{F}\}_{t\in [t_0,T]},u_{x(\cdot)},v_{x(\cdot)}, P_{x(\cdot)})$ is called a profile of public-signal correlated strategies on $[t_0,T]$ if
	\begin{enumerate}[label=(\roman*)]
		\item $(\Omega,\mathcal{F},\{\mathcal{F}_t\}_{t\in [t_0,T]})$ is a measurable space with a filtration;
		\item for each $x(\cdot)\in C([t_0,T];\rd)$, $P_{x(\cdot)}$ is a probability on $\mathcal{F}$;
		\item for each $x(\cdot)\in C([t_0,T];\rd)$, $u_{x(\cdot)}$ (respectively, $v_{x(\cdot)}$) is a $\{\mathcal{F}_t\}_{t\in [t_0,T]}$-progressively measurable process taking values in $U$ (respectively, $V$);
		\item if $x(t)=y(t)$ for all $t\in [t_0,r]$, then 
		\begin{itemize} 
			\item for any $A\in\mathcal{F}_r$, $P_{x(\cdot)}(A)=P_{y(\cdot)}(A)$,
			\item for any $t\in [t_0,r]$,	$u_{x(\cdot)}(t)=u_{y(\cdot)}(t)$, $v_{x(\cdot)}(t)=v_{y(\cdot)}(t)$ $P_{x(\cdot)}$-a.s.
		\end{itemize}
		\item for any $r$, the restrictions of functions $(x(\cdot),t,\omega)\mapsto u_{x(\cdot)}(t,\omega)$, $(x(\cdot),t,\omega)\mapsto v_{x(\cdot)}(t,\omega)$ on $C([t_0,T];\rd)\times [t_0,r]\times \Omega$ are measurable with respect to $\mathbb{F}_{t_0,T}\otimes \mathcal{B}([t_0,r])\otimes \mathcal{F}_{r}$;
		\item for any $A\in \mathcal{F}$, the function $x(\cdot)\mapsto P_{x(\cdot)}(A)$ is measurable with respect to $\mathbb{F}_{t_0,T}$.
	\end{enumerate}
\end{definition}

Let us briefly comment this definition. First, it states that the choice of probability space is the part of the profile of strategies (condition (i)), whereas the probability of the random signal observed by the players  depends on the sample path  of the game (condition (ii)). Further, we assume that the players' controls are functions of the random signal and the sample path of the game (see condition (iii)). Moreover, conditions (iii) and (iv) mean that the random signal and the players' controls depend on the sample path in the nonanticipative way. Conditions (v) and (vi) are technical; they assure the measurable dependence of the players' controls and the probability of the shared signal on the history of the game. 

The definition of the equilibrium involves  unilateral deviations by the players. Usually this means that the player changes only her  control. However, Definition~\ref{def:strategy} states that the probability space is a part of the profile of strategies. Thus, it is natural to allow the deviating player to choose her own probability space which can also comprise the probability space coming from the original profile of strategies. This lead to the following definition.
\begin{definition}\label{def:deviation} Given a profile of public-signal correlated strategies $\mathfrak{w}=(\Omega,\mathcal{F},\{\mathcal{F}_t\}_{t\in [t_0,T]}, P_{x(\cdot)},u_{x(\cdot)},v_{x(\cdot)})$, we say that a profile of strategies $\mathfrak{w}^c=(\Omega^c,\mathcal{F}^c,\{\mathcal{F}^c_t\}_{t\in [t_0,T]}, P^c_{x(\cdot)},u^c_{x(\cdot)},v^c_{x(\cdot)})$ is an unilateral deviation by the  first (respectively, the second) player if there exists a filtered measurable space  $(\Omega',\mathcal{F}',\{\mathcal{F}'\}_{t\in [t_0,T]})$ such that
	\begin{enumerate}[label=(\roman*)]
		\item $\Omega^c=\Omega\times\Omega'$;
		\item $\mathcal{F}^c=\mathcal{F}\otimes\mathcal{F}'$;
		\item $\mathcal{F}^c_t=\mathcal{F}_t\otimes\mathcal{F}'_t$ for $t\in [t_0,T]$;
		\item for any $x(\cdot)\in C([t_0,T];\rd)$ and any $A\in\mathcal{F}$, $P^c_{x(\cdot)}(A\times \Omega')=P_{x(\cdot)}(A)$;
		\item for any $x(\cdot)$, $t\in [t_0,T]$, $\omega\in\Omega$, $\omega'\in\Omega'$, $v_{x(\cdot)}(t,\omega,\omega')=v_{x(\cdot)}(t,\omega)$ (respectively, $u_{x(\cdot)}(t,\omega,\omega')=u_{x(\cdot)}(t,\omega)$).
	\end{enumerate}
\end{definition}

As above let us briefly comment this definition. Conditions (i)--(iii) means that now the signal consists of two parts: the random signal produced in the original strategy $\omega$ and the additional information $\omega'$. Simultaneously, condition (iv) and (v) states that if the player does not deviates, then she does not observe the additional signal $\omega'$.  Notice that  the public-signal profile of strategies is always a deviation from itself. 

%Further, let us define the motion of the deterministic system generated by stochastic strategies. To this end we introduce the generator $L_t[u,v]$ by the rule: 
%if $\phi\in\mathcal{D}$,
%$$(L_t[u,v]\phi)(x)\triangleq \langle f_1(t,x,u)+f_2(t,x,v),\nabla\phi(x)\rangle. $$
%Recall that $L_t[u,v]$ is a linear operator from $\mathcal{D}$ to $C(\rd)$.

Now let us introduce the motion generated by the public-signal correlated profile of strategies.

\begin{definition}\label{def:motion} Let $t_0\in [0,T]$, $x_0\in\rd$, $\mathfrak{w}=(\Omega,\mathcal{F},\{\mathcal{F}_t\}_{t\in [t_0,T]}, P_{x(\cdot)},u_{x(\cdot)},v_{x(\cdot)})$ be a  profile of public-signal correlated strategies on $[t_0,T]$. We say that a pair $(X(\cdot),P)$ is a realization of the motion generated by  $\mathfrak{w}$ and initial position $(t_0,x_0)$ if
	\begin{enumerate}[label=(\roman*)]
		\item $P$ is a probability on $\mathcal{F}$;
		\item $X(\cdot)$ is a $\{\mathcal{F}_t\}_{t\in [t_0,T]}$-adapted process taking values in $\rd$;
		\item $X(t_0)=x_0$ $P$-a.s.;
		\item for $P$-a.e. $\omega\in\Omega$,
		$$\frac{d}{dt}X(t,\omega)=f_1(t,X(t,\omega),u_{X(\cdot,\omega)}(t,\omega))+f_2(t,X(t,\omega),v_{X(\cdot,\omega)}(t,\omega)). $$
		%for any $\phi\in \mathcal{D}$, $$\phi(X(t))-\int_{t_0}^t (L_\tau[u_{X(\cdot)}(\tau),v_{X(\cdot)}(\tau)]\phi)(X(\tau))d\tau $$ is a $\{\mathcal{F}\}_{t\in [t_0,T]}$-martingale;
		\item $P_{x(\cdot)}=P(\cdot|X(\cdot)=x(\cdot))$ i.e. given $A\in \mathcal{F}$, 
		$$P(A)=\int_{C([t_0,T];\rd)}P_{x(\cdot)}(A)\chi(d(x(\cdot))), $$ where $\chi$ is a probability on $C([t_0,T];\rd)$ defined by the rule: for any $\mathcal{A}\in\mathbb{F}_{t_0,T}$, $\chi(\mathcal{A})\triangleq P\{\omega:X(\cdot,\omega)\in\mathcal{A}\}$. 
	\end{enumerate}
\end{definition}

Below we say that the profile of strategies $\mathfrak{w}$ is a profile of stepwise strategies if there exists a partition $\Delta=\{t_j\}_{j=0}^r$ of the time interval $[t_0,T]$ such that the equalities $x(t_j)=y(t_j)$, $j=0,1,\ldots, r$ imply that $$P_{x(\cdot)}=P_{y(\cdot)},  \ \ u_{x(\cdot)}=u_{y(\cdot)},\ \  v_{x(\cdot)}=v_{y(\cdot)}.$$ Note that if the profile of strategies $\mathfrak{w}$ is stepwise, then, for any $(t_0,x_0)\in [0,T]\times\rd$, there exists at least one realization.

For a given initial position $(t_0,x_0)$ and a profile of public-signal correlated strategies~$\mathfrak{w}$, we can introduce  upper and lower player's outcomes by the following rules:
$$J_i^+(t_0,x_0,\mathfrak{w})\triangleq \sup\{\mathbb{E}\gamma_i(X(T)):(X(\cdot),P)\text{ generated by }\mathfrak{w}\text{ and }(t_0,x_0)\}, $$
$$J_i^-(t_0,x_0,\mathfrak{w})\triangleq \inf\{\mathbb{E}\gamma_i(X(T)):(X(\cdot),P)\text{ generated by }\mathfrak{w}\text{ and }(t_0,x_0)\}. $$
Here $\mathbb{E}$ denotes the expectation according to the probability $P$.

\begin{definition}\label{def:nash} We say that a profile of public-signal correlated strategies $\mathfrak{w}^*$ is a public-signal correlated $\varepsilon$-equilibrium  at the position $(t_0,x_0)\in [0,T]\times\rd$ if, for any profile of strategies $\mathfrak{w}^i$ that is an unilateral deviation from $\mathfrak{w}^*$ by the player $i$, the following inequality holds true:
	$$J^+_i(t_0,x_0,\mathfrak{w}^i)\leq J^-_i(t_0,x_0,\mathfrak{w}^*)+\varepsilon. $$
\end{definition}

To construct a public-signal correlated $\varepsilon$-equilibrium  we use a solution of an auxiliary stochastic game with a dynamics  determined by a generator of the L\'{e}vy-Khintchine type $\Lambda_t[u,v]$. The general theory of such stochastic processes described by generator of the L\'{e}vy-Khintchine type is presented in \cite{Kol_markov}. Assume that, for each $t\in [0,T]$, $u\in U$, $v\in V$, $\Lambda_t[u,v]$ is  an operator from $\mathcal{D}$ to $C(\rd)$ defined by the rule:
\begin{multline}\label{intro:Lambda}(\Lambda_t[u,v]\phi)(x)\triangleq \frac{1}{2}\langle G(t,x,u,v)\nabla,\nabla\rangle\phi(x)+\langle b(t,x,u,v),\nabla\rangle\phi(x)\\+\int_{\rd}[\phi(x+y)-\phi(x)-\langle y,\nabla\phi(x)\rangle\mathbf{1}_{B_1}(y)]\nu(t,x,u,v,dy). \end{multline}
Here $B_1$ stands for the ball of radius $1$ centered at the origin; for each $t\in [0,T]$, $x\in\rd$, $u\in U$, $v\in V$, $G(t,x,u,v)$ is a nonnegative symmetric $d\times d$-matrix, $b(t,x,u,v)$ is a $d$-dimensional vector, $\nu(t,x,u,v,\cdot)$ is a measure on $\rd$ such that $\nu(t,x,u,v,\{0\})=0$.
\begin{remark}
	Let us briefly explain the meaning of the coefficients in (\ref{intro:Lambda}).
	Here $b$ stands for the deterministic evolution, $G$ is a squared diffusion coefficient, whereas $\nu(t,x,u,v,\cdot)$ denotes the intensity of jumps. To illustrate this  let us consider two examples. First, assume that the stochastic process $Y(t)$ is determined by the controlled stochastic differential equation
	$$dY(t)=b(t,Y(t),u(t),v(t))dt+\sigma(t,Y(t),u(t),v(t))dW_t. $$ In this case $G(t,x,u,v)=\sigma(t,x,u,v)\sigma^T(t,x,u,v)$, $\nu(t,x,u,v,dy)\equiv 0$. 
	
	Now, let us consider the pure jump processes. In this case (\ref{intro:Lambda}) takes the form 
	$$(\Lambda_t[u,v]\phi)(x)=\int_{\rd}[\phi(x+y)-\phi(x)]\nu(t,x,u,v,dy). $$ This case can be interpreted as follows. Given the state $x$, the value $\nu(t,x,u,v,\rd)dt+o(dt)$ is a probability of the jump  from $x$ on $[t,t+dt]$, whereas $\nu(t,x,u,v,\Upsilon)/\nu(t,x,u,v,\rd)+o(dt)$ is a probability of transition to $\Upsilon$ if the jump takes place.
\end{remark} 

Furthermore, we assume that the objective function of  the player $i$ in the auxiliary stochastic game   is  equal to
\begin{equation}\label{intro:aux_payoff}
\mathbb{E}\left[\gamma_i(X(T))+\int_{t_0}^Th_i(t,X(t),u(t),v(t))dt\right]. 
\end{equation}

Denote
\begin{equation}\label{intro:Sigma}
\Sigma(t,x,u,v)\triangleq \mathrm{tr}G(t,x,u,v)+\int_{\rd}\|y\|^2\nu(t,x,u,v,dy), 
\end{equation}
\begin{equation}\label{intro:g}
g(t,x,u,v)\triangleq b(t,x,u,v)+\int_{\rd\setminus B_1}y\nu(t,x,u,v,du). 
\end{equation}
%Note that, if $q_a(x)\triangleq \|x-a\|^2$ for some constant $a\in \rd$, then
%\begin{equation}\label{equal:quadr}
%\Lambda_t[u,v]q_a(x)=\Sigma(t,x,u,v)+2\langle g(t,x,u,v),x-a\rangle,
%\end{equation}  whereas, for $l_{a}(x)\triangleq \langle x,a\rangle$
%\begin{equation}\label{equal:lin}
%\Lambda_t[u,v]l_a(x)=\langle g(t,x,u,v),a\rangle.
%\end{equation}

We assume that the following conditions hold true:
\begin{enumerate}[label=(L\arabic*)]
	\item $U$, $V$ are metric compacts;
	\item $f_1$, $f_2$ $G$, $b$, $\gamma_1$, $\gamma_2$, $h_1$, $h_2$ are continuous and bounded;
	\item for any $\phi\in \mathcal{D}$, the function $[0,T]\times\rd\times U\times V\ni(t,x,u,v)\mapsto\int_{\rd}\phi(y)\nu(t,x,u,v,dy)$ is continuous. % and bounded when $\phi$ is bounded. 
	\item there exists a function $\alpha(\cdot):\mathbb{R}\rightarrow [0,+\infty)$ such that $\alpha(\delta)\rightarrow 0$ as $\delta\rightarrow 0$ and, for any $t,s\in [0,T]$, $x\in\rd$, $u\in U$, $v\in V$,
	$$\|f(t,x,u,v)-f(s,x,u,v)\|\leq \alpha(t-s), $$
	$$\|g(t,x,u,v)-g(s,x,u,v)\|\leq\alpha(t-s); $$
	
	\item there exists a constant $M$ such that, for any $t\in [0,T]$, $x\in\rd$, $u\in U$, $v\in V$,
	$$\|f(t,x,u,v)\|\leq M,\ \ \|g(t,x,u,v)\|\leq M; $$
	\item\label{cond:lip} there exists a  constant $K>0$ such that, for any $t\in [0,T]$, $x',x''\in\rd$, $u\in U$, $v\in V$,
	$$\|f(t,x',u,v)-f(t,x'',u,v)\|\leq K\|x'-x''\|,$$
	$$\|g(t,x',u,v)-g(t,x'',u,v)\|\leq K\|x'-x''\|; $$
	\item there exists a constant $R>0$ such that, for any $x',x''\in\rd$, $i=1,2$,
	$$|\gamma_i(x')-\gamma_i(x'')|\leq R\|x'-x''\|; $$
	\item\label{cond:delta} for any $t\in [0,T]$, $x\in\rd$, $u\in U$, $v\in V$, $$|\Sigma(t,x,u,v)|\leq \delta^2,$$
	$$\|f(t,x,u,v)-g(t,x,u,v)\|^2\leq 2 \delta^2,$$
	$$|h_i(t,x,u,v)|\leq \delta. $$	 
\end{enumerate}
In condition \ref{cond:delta} $\delta$ is a small parameter.

\section{Approximate equilibrium and stability condition}\label{sect:main_result}	The main result of the paper involves a pair of functions $(c_1,c_2)$ satisfying a stability property (see Condition ($\mathcal{C}$) below). Roughly speaking one can consider $(c_1,c_2)$ as  value function of the nonzero-sum continuous-time stochastic game with the dynamics determined by the generator $\Lambda_t[u,v]$ and objective functions of the players given by (\ref{intro:aux_payoff}). The link between this stability property and system of Hamilton-Jacobi PDEs is given in Section~\ref{sect:sys_HJ}. To formulate the stability condition we introduce the notion of controlled system admissible for the generator $\Lambda_t[u,v]$. This notion uses the relaxed stochastic control of both players first introduced in \cite{Subb_Chen}. 

Denote the set of probabilities on a Polish space  $\Upsilon$ by $\mathrm{rpm}(\Upsilon)$. We endow $\mathrm{rpm}(\Upsilon)$ with the narrow topology i.e.  $\{\chi_n\}_{n=1}^\infty\subset\mathrm{rpm}(\Upsilon)$ converges to $\chi\in\mathrm{rpm}(\Upsilon)$ iff, for any $\phi\in C_b(\Upsilon)$
$$\int_{\Upsilon}\phi(\upsilon)\chi_n(d\upsilon) \rightarrow \int_{\Upsilon}\phi(\upsilon)\chi(d\upsilon)\text{ as }n\rightarrow\infty.$$
This space is Polish \cite{Billingsley}, \cite{Warga}. Moreover, $\mathrm{rpm}(\Upsilon)$ is  compact when $\Upsilon$ is compact.  The mapping $z\mapsto\delta_z$ provides a natural embedding of $\Upsilon$ into $\mathrm{rpm}(\Upsilon)$.  Hereinafter $\delta_z$ stands for the Dirac measure concentrated at $z$. 

A stochastic process taking values in $\mathrm{rpm}(U)$ (respectively, in $\mathrm{rpm}(V)$) is a relaxed stochastic control of the first (respectively, second) player.  Furthermore,  the stochastic process taking values in  $\mathrm{rpm}(U\times V)$ is a  relaxed stochastic controls of both players. If $\mu(t)$ (respectively, $\nu(t)$) is a relaxed control of the first (respectively, second player), we write $\mu(t,du)$ (respectively, $\nu(t,dv)$) instead of $\mu(t,\omega)(du)$ (respectively, $\nu(t,\omega)(dv)$). Analogously, if $\eta(t)$ is a relaxed control of both players, we write $\eta(t,d(u,v))$ for $\eta(t,\omega)(d(u,v))$.

Now let us introduce the notion of controlled system going back to  \cite{fleming_soner}, \cite{GihmanSkorokhod}. This notion generalize the standard notion of deterministic controlled system. The main difference is that in the stochastic case we assume that the control and the motion of the system are stochastic processes defined on some filtered probability space, when the dynamics is determined by solution of martingale problem (see condition \ref{item_def:martingale} of Definition \ref{def:control_system} below). 

\begin{definition}\label{def:control_system} Let $s,r\in [0,T]$, $s<r$. We say that a 6-tuple $(\Omega,\mathcal{F},\{\mathcal{F}\}_{t\in [s,r]},P,\eta,X)$ is a controlled system on $[s,r]$ admissible for the generator $\Lambda_t[u,v]$ if
	\begin{enumerate}[label=(\roman*)]
		\item $(\Omega,\mathcal{F},\{\mathcal{F}_t\}_{t\in [s,r]},P)$ is a filtered probability space;
		\item $\eta$ is a $\{\mathcal{F}_t\}_{t\in [s,r]}$-progressively measurable stochastic process taking values in $\mathrm{rpm}(U\times V)$;
		\item $X$ is a  $\{\mathcal{F}_t\}_{t\in [s,r]}$-adapted stochastic process taking values in $\rd$;
		\item\label{item_def:martingale} for any $\phi\in \mathcal{D}$, the process
		$$\phi(X(t))-\int_s^t\int_{U\times V}(\Lambda_\tau[u,v]\phi)(X(\tau))\eta(\tau,d(u,v))d\tau $$ is a $\{\mathcal{F}_t\}_{t\in [s,r]}$-martingale.
	\end{enumerate}
\end{definition}

The following stability condition plays a key role in the construction of the approximate public-signal correlated equilibrium. 

\begin{definition}\label{def:cond_C} Let $c_1,c_2:[0,T]\times\rd\rightarrow\mathbb{R}$ be continuous functions. We say that the pair $(c_1,c_2)$ satisfies \textit{Condition $(\mathcal{C})$} if, for any $s,r\in [0,T]$, $s<r$, there exists a filtered measurable space $(\widehat{\Omega}^{s,r},\widehat{\mathcal{F}}^{s,r},\{\widehat{\mathcal{F}}^{s,r}_t\}_{t\in [s,r]})$ satisfying the following properties:
	\begin{list}{(\roman{tmp})}{\usecounter{tmp}}
		\item\label{cond_C:both} given $y\in\rd$, one can find processes $\eta_y^{s,r}$, $\widehat{Y}_y^{s,r}$ and a probability $\widehat{P}_y^{s,r}$ such that the 6-tuple
		$(\widehat{\Omega}^{s,r},\widehat{\mathcal{F}}^{s,r},\{\widehat{\mathcal{F}}^{s,r}_t\}_{t\in [s,r]},\widehat{P}_y^{s,r},\eta_y^{s,r}, \widehat{Y}_y^{s,r})$ is a control system admissible for $\Lambda_t[u,v]$ and, for $i=1,2$,
		$$\widehat{\mathbb{E}}_{y}^{s,r}\left[c_i(r,\widehat{Y}_{y}^{s,r}(r))+ \int_s^r\int_{U\times V}h_i(t,\widehat{Y}_y^{s,r}(t),u,v)\eta_y^{s,r}(t,d(u,v))dt\right]=c_i(s,y); $$
		
		\item\label{cond_C:second_dev} for any $y\in\rd$ and $v\in V$, one can find a relaxed stochastic control of the first player $\mu_{y,v}^{s,r}$, a process $\overline{Y}_{y,v}^{1,s,r}$ taking values in    $\rd$ and a probability $\overline{P}_{y,v}^{1,s,r}$ such that the 6-tuple
		$(\widehat{\Omega}^{s,r},\widehat{\mathcal{F}}^{s,r},\{\widehat{\mathcal{F}}^{s,r}_t\}_{t\in [s,r]},\overline{P}_{y,v}^{1,s,r},\mu_{y,v}^{s,r}\otimes \delta_v, \overline{Y}_{y,v}^{1,s,r})$ is a control system admissible for $\Lambda_t[u,v]$ and 
		$$ \overline{\mathbb{E}}_{y,v}^{1,s,r}\left[c_2(r,\overline{Y}_{y,v}^{1,s,r}(r))+ \int_s^r\int_{ U}h_2(t,\overline{Y}_{y,v}^{1,s,r}(t),u,v)\mu_{y,v}^{s,r}(t,du)dt\right]\leq c_2(s,y); $$
		
		\item\label{cond_C:firts_dev} given $y\in\rd$ and $u\in U$, one can find a second player's relaxed stochastic control $\nu_{y,u}^{s,r}$, a process $\overline{Y}_{y,u}^{2,s,r}$ and a probability $\overline{P}_{y,u}^{2,s,r}$ such that the 6-tuple
		$(\widehat{\Omega}^{s,r},\widehat{\mathcal{F}}^{s,r},\{\widehat{\mathcal{F}}^{s,r}_t\}_{t\in [s,r]},\overline{P}_{y,u}^{2,s,r},\delta_u\otimes\nu_{y,u}^{s,r}, \overline{Y}_{y,u}^{2,s,r})$ is a control system admissible for $\Lambda_t[u,v]$ and 
		$$\overline{\mathbb{E}}_{y,u}^{2,s,r}\left[c_1(r,\overline{Y}_{y,u}^{2,s,r}(r))+ 
		\int_s^r\int_{ V}h_1(t,\overline{Y}_{y,u}^{2,s,r}(t),u,v)\nu_{y,u}^{s,r}(t,dv)dt\right]\leq c_1(s,y). $$
		
	\end{list}
	Here $\widehat{\mathbb{E}}_{y}^{s,r}$ (respectively, $\overline{\mathbb{E}}_{y,u}^{1,s,r}$, $\overline{\mathbb{E}}_{y,u}^{2,s,r}$) denotes the expectation according to the probability $\widehat{P}_{y}^{s,r}$ (respectively, $\overline{P}_{y,u}^{1,s,r}$, $\overline{P}_{y,u}^{2,s,r}$).
\end{definition}

Informally speaking, the meaning of 	Condition $(\mathcal{C})$ is as follows. 
The first part of this condition means that both players can maintain the  value $(c_1(s,y),c_2(s,y))$ on the time interval $[s,r]$ choosing an appropriate controlled stochastic system. Parts~(ii),~(iii) mean that if  player $i$ picks a constant control on $[s,r]$, then the other player can find a controlled system such that the outcome of the player $i$ on $[s,r]$ is not greater than $c_i(s,y)$. Here we assume that the terminal part of the $i$-th player's reward on $[s,r]$ is given by $c_i(r,\cdot)$. Additionally, to avoid  technical issues we assume that all mentioned controlled systems exploit the same filtered measurable space. Notice that Condition~$(\mathcal{C})$ is an extension of the notion of $u$-stability first proposed in~\cite{NN_PDG} to examine zero-sum differential games. The variant of Condition~$(\mathcal{C})$ for the case  when $\Lambda_t[u,v]$ is given only by deterministic evolution was considered in~\cite{averboukh_JCDS}.

Further, let \begin{equation}\label{intro:beta}
\beta\triangleq (5+2K),
\end{equation}
\begin{equation}\label{intro:C}
C\triangleq 2\sqrt{ Te^{\beta T}}. 
\end{equation}

\begin{theorem}\label{th:near_Nash}
	Let continuous functions $c_1,c_2:[0,T]\times\rd\rightarrow\mathbb{R}$ be such that
	\begin{itemize}
		\item $c_i(T,x)=\gamma_i(x)$;
		\item $(c_1,c_2)$ satisfies Condition $(\mathcal{C})$.
	\end{itemize} Then, for any $(t_0,x_0)\in [0,T]\times\rd$, and $\varepsilon> (RC+T)\delta$, there exists a   profile of public-signal correlates strategies $\mathfrak{w}^*$ that is $\varepsilon$-equilibrium  at $(t_0,x_0)$. Moreover, if $X^*$ and $P^*$ are generated by $\mathfrak{w}^*$ and $(t_0,x_0)$, $\mathbb{E}^*$ denotes the expectation according to $P^*$, then
	$$|\mathbb{E}^*\gamma_i(X^*(T))-c_i(t_0,x_0)|\leq\varepsilon. $$
	%The profile $\mathfrak{w}^*$ can be found to be stepwise.
\end{theorem}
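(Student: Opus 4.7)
The strategy is to build the $\varepsilon$-equilibrium $\mathfrak{w}^*$ as a profile of \emph{stepwise} public-signal correlated strategies whose random signal is supplied by the cooperative controlled systems from Condition~$(\mathcal{C})$(i). Fix a partition $\Delta=\{t_j\}_{j=0}^r$ of $[t_0,T]$ of sufficiently small mesh. On each subinterval $[t_j,t_{j+1}]$, starting from the observed position $y=X^*(t_j)$, I take the model trajectory $\widehat{Y}^{t_j,t_{j+1}}_y$ driven by the relaxed control $\eta^{t_j,t_{j+1}}_y$ as the source of randomness, and choose both players' deterministic controls so that the motion~(\ref{eq:sys_original}) tracks~$\widehat{Y}$. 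Since $f=f_1+f_2$ decomposes additively and $\|f-g\|\leq\sqrt{2}\delta$ by~\ref{cond:delta}, a measurable selection from the marginals of $\eta^{t_j,t_{j+1}}_y$ on $U$ and $V$ produces the required progressively measurable $u_{x(\cdot)}(t),v_{x(\cdot)}(t)$. The probability space of $\mathfrak{w}^*$ is the product of the spaces $\widehat{\Omega}^{t_j,t_{j+1}}$ glued at the grid points via the conditional distributions $\widehat{P}^{t_j,t_{j+1}}_y$; the measurability axioms of Definition~\ref{def:strategy} then follow from the fact that Condition~$(\mathcal{C})$ fixes a common filtered space on each subinterval, so all data depend on $y$ through measurable parametrizations.

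The quantitative control on $\mathfrak{w}^*$ rests on a single moment estimate. Applying the martingale property (Definition~\ref{def:control_system}\ref{item_def:martingale}) to the quadratic test functions $x\mapsto\|x-a\|^2\in\mathcal{D}$ and using~\ref{cond:delta} yields a variance-type bound of order $\delta^2(r-s)$ for $\widehat{Y}$ around the drift $\int_s^r g\,dt$. Combining this with $\|f-g\|\leq\sqrt{2}\delta$, the Lipschitz estimate~\ref{cond:lip}, and Gronwall's inequality across the partition produces $\mathbb{E}^*\|X^*(T)-\widehat{Y}(T)\|^2\leq(C\delta/2)^2$, where $\beta=5+2K$ and $C=2\sqrt{Te^{\beta T}}$ as in~(\ref{intro:beta})--(\ref{intro:C}). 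Since $\gamma_i$ is $R$-Lipschitz and $|h_i|\leq\delta$, a telescoping argument based on Condition~$(\mathcal{C})$(i) on each step converts this into $|\mathbb{E}^*\gamma_i(X^*(T))-c_i(t_0,x_0)|\leq(RC+T)\delta<\varepsilon$, which is the ``moreover'' part of the theorem.

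To establish the equilibrium inequality for an arbitrary unilateral deviation $\mathfrak{w}^i$ by player $i$, I would argue by backward induction on the grid points of $\Delta$. On each subinterval $[t_j,t_{j+1}]$ the non-deviating player's control remains dictated by the cooperative model and is, from the deviator's point of view, essentially a fixed relaxed control; after replacing the deviator's own control by its conditional average on the subinterval, parts~(ii)--(iii) of Condition~$(\mathcal{C})$ apply and give a one-step bound $\mathbb{E}\bigl[c_i(t_{j+1},X^c(t_{j+1}))+\int_{t_j}^{t_{j+1}}\!h_i\,dt\,\big|\,\mathcal{F}^c_{t_j}\bigr]\leq c_i(t_j,X^c(t_j))+O(\delta(t_{j+1}-t_j))$, with the discretization error controlled by~\ref{cond:lip}. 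Summing across $\Delta$, exploiting the product structure $\Omega\times\Omega'$ from Definition~\ref{def:deviation}, and letting the mesh shrink below the slack $\varepsilon-(RC+T)\delta$, I obtain $J^+_i(t_0,x_0,\mathfrak{w}^i)\leq c_i(t_0,x_0)+(RC+T)\delta\leq J^-_i(t_0,x_0,\mathfrak{w}^*)+\varepsilon$.

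The main obstacle is the reconciliation just sketched. The relaxed controls $\mu^{s,r}_{y,v}$ and $\nu^{s,r}_{y,u}$ of Condition~$(\mathcal{C})$ are defined only against a \emph{constant} opposing control, whereas a genuine deviation produces a time-varying $\mathcal{F}^c_t$-progressively measurable process on the enlarged space $\Omega\times\Omega'$; moreover, the non-deviating player's cooperative control, although ``frozen'' by the model, is itself random through the signal. Averaging the deviator's control to obtain a constant (or relaxed) proxy on each subinterval, and simultaneously certifying that the resulting tracking and punishment systems remain valid controlled systems for $\Lambda_t[u,v]$ in the sense of Definition~\ref{def:control_system}, is where the construction of Section~\ref{sect:construction} and the estimates of Section~\ref{sect:extremal} enter in an essential way before the final assembly in Section~\ref{sect:proof_of_the_main_result}.
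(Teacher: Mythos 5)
Your overall architecture (stepwise public-signal strategies built on the model systems of Condition~$(\mathcal{C})$, a Gronwall-type tracking estimate with the constants $\beta$ and $C$, and a telescoping of $c_i$ across the partition) matches the paper's. But two of your central mechanisms are not the ones that actually make the proof close, and the second of these is a genuine gap rather than a detail.

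First, the tracking controls. You propose to obtain $u_{x(\cdot)},v_{x(\cdot)}$ as measurable selections from the marginals of the cooperative relaxed control $\eta^{t_j,t_{j+1}}_y$. A pointwise selection from a relaxed control does not reproduce the averaged drift $\int f\,d\eta$, and, more importantly, ``copying the model's control'' gives no tracking guarantee once the opponent stops cooperating. The paper instead uses Krasovskii--Subbotin extremal aiming: $u^*$ is $u_\natural(t_{k-1},x(t_{k-1}),Y^1(t_{k-1}))$ minimizing $\langle x-y,f_1(t,x,u)\rangle$ (see (\ref{intro:u_nat_down})--(\ref{intro:v_star})), so that each player independently steers the real ODE toward a shared model regardless of the other's behaviour. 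This choice is what makes the cross term in (\ref{ineq:key_E_x_0_Y_r}) nonpositive and yields Lemma~\ref{lm:key_estimate}.

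Second, and decisively, the deviation argument. You correctly identify the obstacle --- parts (ii)--(iii) of Condition~$(\mathcal{C})$ are stated only against a \emph{constant} control of the deviator, acting on the \emph{model} system, while the actual deviator applies an arbitrary progressively measurable control to the \emph{real} system --- but your proposed resolution, ``replacing the deviator's control by its conditional average on the subinterval,'' does not resolve it. An averaged control is neither constant nor a control of the model system, and no averaging step can transfer the bound from the model to the real trajectory. The paper's resolution has three ingredients you are missing: (a) a detection test (\ref{ineq:decision}), computed from the publicly observed trajectory and the statistic $\Psi^k$, which tells the non-deviator \emph{when} to switch from the cooperative model $Y^0$ to the punishment models $Y^1,Y^2$; (b) the punishment model is instantiated from Condition~$(\mathcal{C})$(iii) with the specific constant control $\bar u_k=u^\natural(t_{k-1},x(t_{k-1}),Y^2(t_{k-1}))$ that \emph{maximizes} the aiming functional, so that for every real control $\tilde u(t)$ one has $\langle f_1(t,x,\tilde u(t))-f_1(t,x,\bar u_k),x-y\rangle\leq 0$ and the real trajectory is forced to track the punishment model (Lemma~\ref{lm:key_estimate_dev}); (c) the supermartingale property of $c_1(t_k,\mathcal{Y}^{\natural,2}(t_k))+\int h_1$ then follows by combining part (i) before the switching time $\Theta$ with part (iii) after it. Without (a)--(b) your backward induction has no one-step inequality to sum, so the equilibrium estimate $J^+_i\leq c_i(t_0,x_0)+(RC+T)\delta$ is not established.
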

To prove this Theorem we introduce the profile of stepwise public-signal correlated strategies in Section \ref{sect:construction}. The construction involves an auxiliary stochastic processes   those can be regarded as models of the original game. The properties of these models are examined in Section \ref{sect:extremal}. They play the crucial role in the proof of Theorem \ref{th:near_Nash} that is presented in Section \ref{sect:proof_of_the_main_result}.

\section{System of Hamilton-Jacobi PDEs}\label{sect:sys_HJ}

In this section we specify Condition~$(\mathcal{C})$ and provide a link between this condition and  a system of Hamilton-Jacobi PDEs. First, we consider the case when the dynamics of the auxiliary system is given by the generator of the general form. For this case we prove that if $c_1,c_2$ is a classical solution to the system of Hamilton-Jacobi equations, then it satisfies Condition~$(\mathcal{C})$. Additionally, we consider the specific case when the dynamics of  the auxiliary system is determined by the stochastic differential equation. In this case we prove that if $(c_1,c_2)$ is the generalized (strong) solution of the system of Hamilton-Jacobi equations, then Condition  $(\mathcal{C})$ holds.

\begin{theorem}\label{th:smooth}
	Assume that the functions $c_1,c_2:[0,T]\times\rd\rightarrow\mathbb{R}$, $u^0:[0,T]\times\rd\rightarrow U$, $v^0:[0,T]\times \rd\rightarrow V$ satisfy the following conditions:
	\begin{enumerate}
		\item the functions $c_1$, $c_2$ are of the class $C^2$;
		\item   for $i=1,2$,
		\begin{equation}\label{eq:system_HJB}
		\begin{split}
		&\frac{\partial c_i}{\partial t}+\Lambda_t[u^0(t,x),v^0(t,x)]c_i(t,x)+h_i(t,x,u^0(t,x),v^0(t,x))=0,\\ &c_i(T,x)=\gamma_i(x).
		\end{split}
		\end{equation}
		\item \begin{multline}\label{equal:u_o}\Lambda_t[u^0(t,x),v^0(t,x)]c_i(t,x)+h_i(t,x,u^0(t,x),v^0(t,x))\\=\max_{u\in U}\left[\Lambda_t[u,v^0(t,x)]c_i(t,x)+h_i(t,x,u,v^0(t,x))\right], \end{multline}
		\begin{multline}\label{equal:v_o}\Lambda_t[u^0(t,x),v^0(t,x)]c_i(t,x)+h_i(t,x,u^0(t,x),v^0(t,x))\\=\max_{v\in V}\left[\Lambda_t[u^0(t,x),v]c_i(t,x)+h_i(t,x,u^0(t,x),v)\right]. \end{multline}
		\item given $[s,r]\in [0,T]$, $s<r$, there exist solutions of the martingale problems on $[s,r]$ for the generators $\Lambda_t[u^0(t,\cdot),v^0(t,\cdot)]$, $\Lambda_t[u,v^0(t,\cdot)]$, $\Lambda_t[u^0(t,\cdot),v]$, $u\in U$, $v\in V$; moreover, one can find a common filtered measurable space with a filtration suitable for all mentioned problems.
	\end{enumerate}
	Then, the pair $(c_1,c_2)$ satisfies Condition $(\mathcal{C})$. In particular, for any $(t_0,x_0)\in [0,T]\times\rd$, $\varepsilon> (RC+T)\delta$, there exists the public-signal correlated $\varepsilon$-equilibrium  at $(t_0,x_0)$.
\end{theorem}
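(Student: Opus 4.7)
The plan is to verify Condition~$(\mathcal{C})$ directly for $(c_1,c_2)$ and then invoke Theorem~\ref{th:near_Nash}. Fix $s,r\in[0,T]$ with $s<r$ and let $(\widehat{\Omega}^{s,r},\widehat{\mathcal{F}}^{s,r},\{\widehat{\mathcal{F}}^{s,r}_t\}_{t\in[s,r]})$ be the common filtered measurable space provided by hypothesis~4. On this space each of the items \ref{cond_C:both}--\ref{cond_C:firts_dev} of Definition~\ref{def:cond_C} will be produced by solving an appropriate martingale problem and applying the time-dependent Dynkin formula to $c_i$; the PDE~(\ref{eq:system_HJB}) together with the maximality conditions (\ref{equal:u_o})--(\ref{equal:v_o}) then reduces each item to a pointwise comparison of $\partial_t c_i+\Lambda_t[u,v]c_i+h_i$.

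For item \ref{cond_C:both}, given $y\in\rd$, take a solution $(\widehat{Y}^{s,r}_y,\widehat{P}^{s,r}_y)$ of the martingale problem for $\Lambda_t[u^0(t,\cdot),v^0(t,\cdot)]$ with $\widehat{Y}^{s,r}_y(s)=y$, and put $\eta^{s,r}_y(t)\triangleq\delta_{(u^0(t,\widehat{Y}^{s,r}_y(t)),\,v^0(t,\widehat{Y}^{s,r}_y(t)))}$. Dynkin's formula combined with~(\ref{eq:system_HJB}) rewrites $\widehat{\mathbb{E}}^{s,r}_y[c_i(r,\widehat{Y}^{s,r}_y(r))]-c_i(s,y)$ as $-\widehat{\mathbb{E}}^{s,r}_y\int_s^r h_i(t,\widehat{Y}^{s,r}_y(t),u^0,v^0)\,dt$, which is precisely the equality in \ref{cond_C:both}. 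For item \ref{cond_C:second_dev}, fix $v\in V$, let $(\overline{Y}^{1,s,r}_{y,v},\overline{P}^{1,s,r}_{y,v})$ solve the martingale problem for the frozen generator $\Lambda_t[u^0(t,\cdot),v]$ on the same space, and set $\mu^{s,r}_{y,v}(t)\triangleq\delta_{u^0(t,\overline{Y}^{1,s,r}_{y,v}(t))}$. The maximality~(\ref{equal:v_o}) applied to $i=2$ together with the PDE for $c_2$ yields the pointwise bound
$$\partial_t c_2(t,x)+\Lambda_t[u^0(t,x),v]c_2(t,x)+h_2(t,x,u^0(t,x),v)\leq 0,$$
so Dynkin followed by taking expectation delivers the required inequality $\leq c_2(s,y)$. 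Item \ref{cond_C:firts_dev} is proved by the symmetric argument using~(\ref{equal:u_o}) for $i=1$, the frozen generator $\Lambda_t[u,v^0(t,\cdot)]$, and $\nu^{s,r}_{y,u}(t)\triangleq\delta_{v^0(t,\overline{Y}^{2,s,r}_{y,u}(t))}$.

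The main technical point to secure is the validity of the time-dependent Dynkin formula for $c_i$: Definition~\ref{def:control_system} states the martingale property only for test functions $\phi\in\mathcal{D}$, whereas $c_i$ is merely assumed to be of class $C^{1,2}$ and is not a priori a member of $\mathcal{D}$. Under the standing boundedness assumptions, in particular the bound $\Sigma\leq\delta^2$ from~\ref{cond:delta} controlling the second-moment contribution of the L\'{e}vy kernel, the extension from $\mathcal{D}$ to bounded $C^{1,2}$ functions is standard via a $C_b^2$-approximation argument, and this is the step I expect to require the most care. Once Condition~$(\mathcal{C})$ is in place, both the existence of the public-signal correlated $\varepsilon$-equilibrium at $(t_0,x_0)$ and the outcome bound $|\mathbb{E}^*\gamma_i(X^*(T))-c_i(t_0,x_0)|\leq\varepsilon$ follow immediately from Theorem~\ref{th:near_Nash}.
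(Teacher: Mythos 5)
Your proof is correct and follows essentially the same route as the paper: the same closed-loop and frozen-control martingale problems on the common filtered space of hypothesis 4, the same Dirac feedback controls, and Dynkin's formula combined with (\ref{eq:system_HJB})--(\ref{equal:v_o}); you even invoke the right maximality condition for part (ii) of Condition $(\mathcal{C})$, namely (\ref{equal:v_o}) with $i=2$, where the paper's proof refers to (\ref{equal:u_o}), apparently a slip. The only place you go beyond the paper is in explicitly flagging the extension of the martingale property from test functions $\phi\in\mathcal{D}$ to the time-dependent $C^{1,2}$ functions $c_i$, a step the paper subsumes under ``standard dynamic programming arguments.''
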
 
\begin{remark}\label{remark:HJ}
	Notice that (\ref{eq:system_HJB}) is a system of Hamilton-Jacobi PDEs corresponding to the generator $\Lambda_t[u,v]$ and objective functions (\ref{intro:aux_payoff}), whereas equations (\ref{equal:u_o}), (\ref{equal:v_o}) mean that $u^0(t,x)$ and $v^0(t,v)$ are Nash equilibrium feedback strategies.
\end{remark}

\begin{proof}[Proof of Theorem \ref{th:smooth}]
	Let $(\widetilde{\Omega}^{s,r},\widetilde{\mathcal{F}}^{s,r}, \{\widetilde{\mathcal{F}}_t^{s,r}\}_{t\in [s,r]})$ be a filtered measurable space suitable for the solution of the martingale problems for the generators $\Lambda_t[u^0(t,\cdot),v^0(t,\cdot)]$, $\Lambda_t[u,v^0(t,\cdot)]$, $\Lambda_t[u^0(t,\cdot),v]$. Put $\widehat{\Omega}^{s,r}\triangleq \widetilde{\Omega}^{s,r}$, $\widehat{\mathcal{F}}^{s,r}\triangleq \widetilde{\mathcal{F}}^{s,r}$, $\widehat{\mathcal{F}}^{s,r}_t\triangleq \widetilde{\mathcal{F}}^{s,r}_t$.  Furthermore, given $y\in\rd$, let $\widetilde{P}^{0,s,r}_y$, $\widetilde{Y}^{0,s,r}_y$ be  such that $(\widetilde{\Omega}^{s,r},\widetilde{\mathcal{F}}^{s,r}, \{\widetilde{\mathcal{F}}_t^{s,r}\}_{t\in [s,r]},\widetilde{P}^{0,s,r}_y, \widetilde{Y}^{0,s,r}_y)$ solves the martingale problem for the generator $\Lambda_t[u^0(t,\cdot),v^0(t,\cdot)]$ and initial position $(s,y)$.  Set $\widehat{P}^{s,r}_y\triangleq \widetilde{P}^{0,s,r}_y$, $\widehat{Y}^{s,r}_y\triangleq \widetilde{Y}^{0,s,r}_y$, $\eta^{s,r}_y(t)\triangleq \delta_{u^0(t,\widetilde{Y}^{0,s,r}_y(t))}\times\delta_{v^0(t,\widetilde{Y}^{0,s,r}_y(t))}$. Using the standard dynamical programming arguments, one can prove that the part (i) of Condition~$(\mathcal{C})$ is fulfilled. 
	
	To prove the second part of Condition ($\mathcal{C}$) use the dynamic programming principle and (\ref{eq:system_HJB}), (\ref{equal:u_o}) letting $\overline{P}^{1,s,r}_{y,v}\triangleq \widetilde{P}^{1,s,r}_{y,v}$, $\overline{Y}^{1,s,r}_{y,v}\triangleq \widetilde{Y}^{1,s,r}_{y,v}$, $\mu^{s,r}_{y,v}(t)\triangleq \delta_{u^0(t,\overline{Y}^{1,s,r}_{y,v}(t))}$. Here $\widetilde{P}^{1,s,r}_{y,v}$ and $\widetilde{Y}^{1,s,r}_{y,v}$ are, respectively, a probability on $\widetilde{\mathcal{F}}^{s,r}$ and  a stochastic process defined on $\widetilde{\Omega}^{s,r}$ such that $(\widetilde{\Omega}^{s,r},\widetilde{\mathcal{F}}^{s,r}, \{\widetilde{\mathcal{F}}_t^{s,r}\}_{t\in [s,r]},\widetilde{P}^{1,s,r}_{y,v}, \widetilde{Y}^{1,s,r}_{y,v})$ solves the martingale problem for the generator $\Lambda_t[u^0(t,\cdot),v]$ and initial position $(s,y)$.
	
	The third part is proved in the same way. 
\end{proof}

Now let us restrict the attention to the case when the auxiliary system is given by the controlled stochastic differential equation
\begin{equation}\label{sys:2nd_order_full}
dX(t)=b(t,X(t),u(t),v(t))dt+\sigma(t,X(t),u(t),v(t))dW(t)
\end{equation}  where $W(t)$ stands for the $m$-dimensional Wiener process and $\sigma:[0,T]\times\rd\times U\times V\rightarrow \mathbb{R}^{d\times m}$. This corresponds to the generator 
$$\Lambda_t[u,v]\phi(x)=\frac{1}{2}\langle G(t,x,u,v)\nabla,\nabla\rangle\phi(x)+\langle b(t,x,u,v),\nabla\rangle\phi(x),  $$ with $G(t,x,u,v)=\sigma(t,x,u,v)\sigma^T(t,x,u,v)$. Thus, system (\ref{eq:system_HJB}) is now the system of second order PDEs:
\begin{equation}\label{eq:system_HJB2}
\begin{split}
\frac{\partial c_i}{\partial t}+\frac{1}{2}\langle G(t,x&,u^0(t,x),v^0(t,x))\nabla,\nabla\rangle c_i(t,x)\\+\langle b(t,&x,u^0(t,x),v^0(t,x)),\nabla\rangle c_i(t,x)+h_i(t,x,u^0(t,x),v^0(t,x))=0,\\ c_i(T,x)=\gamma_i(x&).
\end{split}
\end{equation} 

However, to apply Theorem \ref{th:smooth} directly to system (\ref{sys:2nd_order_full}) we are to assume that system (\ref{eq:system_HJB2}) admits a classical solution. That is rather restrictive assumption. For the case when $\sigma$ does not depend on $u$ and $v$ this assumption can be weakened. This result is proved by the dynamic programming arguments. It is a counterpart of \cite[Theorem 17.2.1]{Friedman_stoch_book}, \cite[Theorem 4.1]{Hamadene_Manucci}, \cite[Theorem 4.1]{Manucci_1}, \cite[Theorem 3.6]{Manucci_2}.

We assume that  the dynamics of the auxiliary system is determined by the stochastic differential equation of the following form:
\begin{equation}\label{eq:stoch_sys}
dX(t)=b(t,X(t),u(t),v(t))dt+\sigma(t,X(t))dW(t).
\end{equation} 
This system corresponds to the generator
\begin{equation}\label{intro:generator_2nd_specific}
\Lambda_t[u,v]\phi(x)=\langle b(t,x,u,v),\nabla \phi(x)\rangle+\frac{1}{2}\langle G(t,x)\nabla,\nabla\phi(x)\rangle,
\end{equation} where $G(t,x)=\sigma(t,x)\sigma^T(t,x)$.

%Let us put $$\mathcal{H}_i(t,x,p_1,u,v)\triangleq p_ib(t,x,u,v)+h_i(t,x,u_1,u_2).$$
We will study the link between the generalized solution of the system of parabolic equation corresponding to the game with  dynamics (\ref{intro:generator_2nd_specific}) and condition $(\mathcal{C})$ under the following additional assumptions:
\begin{enumerate}[label=(A\arabic*)]
	\item $W(t)$ is a $d$-dimensional Wiener process, $\sigma$ is $d\times d$-matrix;
	\item there exists a constant $C_0>0$ such that, for any $t\in [0,T]$, $x\in\rd$, $C_0^{-1}I\leq G(t,x)\leq C_0I$;
	\item $\sigma$ is Lipschitz continuous w.r.t. $x$;
	\item there exist measurable functions $u^N(t,x,p_1,p_2)$, $v^N(t,x,p_1,p_2)$  taking values in $U$ and $V$ respectively such that, for any $t\in [0,T]$, $x,p_1,p_2\in \rd$, $u\in U$, $v\in V$.
	$$\mathcal{H}_1(t,x,p_1,u^N(t,x,p_1,p_2),v^N(t,x,p_1,p_2))\geq \mathcal{H}_1(t,x,p_1,u,v^N(t,x,p_1,p_2)), $$
	$$\mathcal{H}_2(t,x,p_2,u^N(t,x,p_1,p_2),v^N(t,x,p_1,p_2))\geq \mathcal{H}_2(t,x,p_2,u^N(t,x,p_1,p_2),v). $$	
\end{enumerate}
Here $I$ stands for the identity matrix, whereas $$\mathcal{H}_i(t,x,p,u,v)\triangleq \langle p b(t,x,u,v)\rangle+h_i(t,x,u,v). $$ 

Notice that now  the system of Hamilton-Jacobi equation is the following system of parabolic equations:
\begin{equation}\label{eq:parabolic_system}
\begin{split}
\frac{\partial c_i}{\partial t}+\mathcal{H}_i(t,x,\nabla c_i,u^N(t,x,\nabla c_1,\nabla &c_2),u^N(t,x,\nabla c_1,\nabla c_2))\\&+\langle G(t,x)\nabla,\nabla\rangle c_i(t,x)=0.
\end{split}
\end{equation}
\begin{equation}\label{eq:parabolic_bound_condition}
c_i(T,x)=g_i(x).
\end{equation} This system is a variant of (\ref{eq:system_HJB2}) with $u^0(t,x)=u^N(t,x,\nabla c_1(t,x),\nabla c_2(t,x))$, $v^0(t,x)=v^N(t,x,\nabla c_1(t,x),\nabla c_2(t,x))$.

As it was mentioned above, we  consider strong generalized solutions of system~(\ref{eq:parabolic_system}). 
This solution concept relies on the following definitions. Let $\Upsilon$ be a subset of $[0,T]\times\rd$. Denote by $H^{1+\kappa}(\Upsilon)$ the set of  functions $\varphi:\Upsilon\rightarrow\mathbb{R}$ those satisfy H\"older condition for the exponent $\kappa$ with its derivatives w.r.t. spatial variables. Further, let $W^{1,2}_q(\Upsilon)$ be the set of functions $\varphi:\Upsilon\rightarrow\mathbb{R}$ such that $\varphi\in L_q(\Upsilon)$ and there exist generalized derivatives $\partial\varphi/\partial t$, $\partial\varphi/\partial x_i$, $\partial^2\varphi/\partial x_i\partial x_j$ which belong to $L_q(\Upsilon)$.

\begin{definition}\label{def:strong_solution_pde} The pair $(c_1,c_2)$ is a strong solution of (\ref{eq:parabolic_system}), (\ref{eq:parabolic_bound_condition}) if
	\begin{enumerate}
		\item $c_1,c_2\in L_{\infty}([0,T]\times \rd)$;
		\item for any bounded $\Xi\subset\rd$, and some $\kappa\in (0,1)$, $q>d+2$, $c_1,c_2\in H^{1+\kappa}([0,T]\times \mathrm{cl}(\Xi))\cap W^{1,2}_q((0,T)\times\Xi)$;
		\item (\ref{eq:parabolic_system}) holds almost everywhere in $(0,T)\times \Xi$, (\ref{eq:parabolic_bound_condition})  is fulfilled in $\Xi$.
	\end{enumerate}
\end{definition}

Note that any classical solution of (\ref{eq:parabolic_system}), (\ref{eq:parabolic_bound_condition}) is a strong solution. 

\begin{remark}\label{remark:2_order} There are several papers dealing with the link between the value functions of the stochastic differential games and strong solutions of the system of parabolic PDEs. Let us mention only \cite{Bensoussan_Frehse_2000}, \cite{Friedman}, \cite{Hamadene_Manucci}, \cite{Manucci_1}, \cite{Manucci_2}. In particular, for the case of stochastic differential games on the bounded domain of $\rd$ it is proved that if the functions $u^N$ and $v^N$ are continuous w.r.t. $p_i$, there exists a strong solution of the corresponding system of parabolic PDEs that provides the Nash equilibrium in the stochastic differential game \cite{Friedman_stoch_book} (see, also, \cite{Bensoussan_Frehse_1984}, \cite{Bensoussan_Frehse_2000}). The case when the strategies are not continuous  was studied in \cite{Manucci_2} under some additional assumptions. Several existence results for system (\ref{eq:parabolic_system}), (\ref{eq:parabolic_bound_condition}) were obtained in \cite{Hamadene_Manucci}. These results covers the cases when
	\begin{itemize}	
		\item the drift is bounded whereas $u^N$, $v^N$ are continuous w.r.t. adjoint variables; 
		\item the drifts and the running rewards of the players have a separate structure whereas the strategies $u^N$, $v^N$ are merely measurable.
	\end{itemize}
\end{remark}

\begin{theorem}\label{th:link_strong} Let the auxiliary stochastic system be given by (\ref{eq:stoch_sys}). Assume that conditions (A1)--(A4) hold. If $c_1,c_2$ is a strong generalized solution of (\ref{eq:parabolic_system}), (\ref{eq:parabolic_bound_condition}), then $(c_1,c_2)$ satisfies condition $(\mathcal{C})$ for generator $\Lambda$ given by (\ref{intro:generator_2nd_specific}).
\end{theorem}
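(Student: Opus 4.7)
The plan is to construct a common filtered measurable space and, on it, weak solutions of the SDE (\ref{eq:stoch_sys}) driven by appropriate measurable feedback controls, and then invoke a generalized It\^o--Krylov formula on the strong solution $(c_1,c_2)$ to convert the PDE (\ref{eq:parabolic_system}) into the desired equality and inequalities.

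Concretely, I would take $\widehat{\Omega}^{s,r}$ to be the canonical Wiener space $C([s,r];\rd)$ equipped with a reference $d$-dimensional Wiener process $W$ and its augmented natural filtration $\{\widehat{\mathcal{F}}^{s,r}_t\}_{t\in[s,r]}$. Thanks to assumption (A2) (uniform ellipticity of $G$) and (A3) (Lipschitz $\sigma$), the reference diffusion $dZ=\sigma(t,Z)dW$ starting at $y$ admits a strong solution, and for every bounded measurable drift one obtains a weak solution via Girsanov's theorem on the same probability space; equivalently, the corresponding martingale problem is well posed. Define the Nash feedback $u^0(t,x)\triangleq u^N(t,x,\nabla c_1(t,x),\nabla c_2(t,x))$ and $v^0(t,x)\triangleq v^N(t,x,\nabla c_1(t,x),\nabla c_2(t,x))$. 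Since $\nabla c_1,\nabla c_2$ are defined a.e.\ by Definition~\ref{def:strong_solution_pde}, these are measurable and bounded (the compactness of $U,V$ guarantees boundedness of any measurable selector). Let $\widehat{P}^{s,r}_y$ and $\widehat{Y}^{s,r}_y$ denote the resulting weak solution started at $y$ driven by $(u^0,v^0)$, and set $\eta^{s,r}_y(t)\triangleq\delta_{u^0(t,\widehat{Y}^{s,r}_y(t))}\otimes\delta_{v^0(t,\widehat{Y}^{s,r}_y(t))}$.

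For part (\ref{cond_C:both}) I would apply the generalized It\^o formula of Krylov, valid for $W^{1,2}_q$ functions with $q>d+2$ along a non-degenerate It\^o diffusion with bounded drift, to $c_i(t,\widehat{Y}^{s,r}_y(t))$. Because $G$ is uniformly elliptic, occupation time estimates ensure that the null set on which (\ref{eq:parabolic_system}) may fail is not seen by the diffusion, so one obtains
\[
c_i(r,\widehat{Y}^{s,r}_y(r))-c_i(s,y)=\int_s^r\!\Big[\tfrac{\partial c_i}{\partial t}+\langle b,\nabla c_i\rangle+\tfrac12\langle G\nabla,\nabla\rangle c_i\Big]dt+\text{mart.}
\]
evaluated at $(t,\widehat{Y}^{s,r}_y(t),u^0,v^0)$. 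The PDE (\ref{eq:parabolic_system}) rewrites the bracket as $-h_i(t,\widehat{Y}^{s,r}_y(t),u^0,v^0)$, and taking expectations yields the required identity. For part~(\ref{cond_C:second_dev}) fix $y$ and $v\in V$, and consider the weak solution $\overline{Y}^{1,s,r}_{y,v}$ driven by $(u^0(t,\cdot),v)$ on the same canonical space, with $\mu^{s,r}_{y,v}(t)\triangleq\delta_{u^0(t,\overline{Y}^{1,s,r}_{y,v}(t))}$. Applying the same It\^o--Krylov formula to $c_2$ and using the Nash inequality in (A4), namely $\mathcal{H}_2(t,x,\nabla c_2,u^0,v^0)\geq \mathcal{H}_2(t,x,\nabla c_2,u^0,v)$, one gets $\partial_t c_2+\langle b(\cdot,u^0,v),\nabla c_2\rangle+\tfrac12\langle G\nabla,\nabla\rangle c_2+h_2(\cdot,u^0,v)\leq 0$ a.e., from which the required inequality follows after taking expectation. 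Part~(\ref{cond_C:firts_dev}) is obtained symmetrically by freezing $u\in U$ for player~1 and driving the SDE by $(u,v^0(t,\cdot))$.

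The main obstacle is the rigorous application of the It\^o--Krylov formula to $c_i$, which is only of Sobolev class $W^{1,2}_q$ on bounded cylinders. This requires: (a) a stopping argument localizing the diffusion to a bounded ball $\Xi$, which is legitimate because $c_i\in L_\infty$ and the hitting time of the complement of $\Xi$ tends to $T$ in probability uniformly over the controls (by standard moment estimates for the SDE under (L5), (L6)); (b) using the non-degeneracy (A2) and Krylov's estimate $\mathbb{E}\int_s^r\!\mathbf{1}_N(t,\widehat{Y})dt\leq C\|\mathbf{1}_N\|_{L_q}$ to ensure the a.e.\ validity of (\ref{eq:parabolic_system}) transfers to a $P$-a.s.\ pointwise identity along the trajectory. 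A secondary, more bookkeeping, difficulty is to arrange the three families of weak solutions (for $(u^0,v^0)$, $(u^0,v)$, $(u,v^0)$) on the \emph{same} filtered space $\widehat{\Omega}^{s,r}$; this is handled by fixing the canonical Wiener base and producing each weak solution by Girsanov transformation of the same reference diffusion, which only alters the probability measure and not the measurable structure.
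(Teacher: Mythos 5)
Your proposal is correct and follows essentially the same route as the paper: feedback controls built from $\nabla c_1,\nabla c_2$ via $u^N,v^N$, the It\^o--Krylov formula applied to the $W^{1,2}_q$ solution with localization to balls and exit times, nondegeneracy/Krylov's estimate (equivalently, the existence of a density) to pass from the a.e.\ PDE to an identity along trajectories, and the Nash inequalities in (A4) for parts (ii)--(iii). The only material difference is that you produce the controlled diffusions as weak solutions via Girsanov on a fixed canonical Wiener base, whereas the paper extends $\nabla c_i$ to globally H\"older functions on $[0,T]\times B_R$ and invokes Veretennikov's strong-existence theorem before letting $R\rightarrow\infty$; both devices are adequate here since Condition $(\mathcal{C})$ only requires the martingale-problem (weak) formulation on a common filtered measurable space, with the probabilities allowed to depend on the initial state and the frozen control.
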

\begin{proof} Let us fix $s,r\in [0,T]$, $s<r$. We put $(\widehat{\Omega}^{s,r},\widehat{\mathcal{F}}^{s,r},\{\widehat{\mathcal{F}}^{s,r}_t\}_{t\in [s,r]},\widehat{P}^{s,r})$ to be the standard probability space carrying the $d$-dimensional Wiener process on $[s,r]$. Note that here we assume that the probabilities $\widehat{P}^{s,r}$ do not depend on the initial state~$y$. Moreover, we set  $\overline{P}^{1,s,r}_{y,v}=\overline{P}^{2,s,r}_{y,u}\triangleq \widehat{P}^{s,r}$.
	
	Let $R>0$. Denote by $B_R$ the ball centered at the origin of the radius $R$. Since  $c_i\in H^{1+\kappa}([s,r]\times B_R)$, for each $i=1,2$, one can construct a function $c_{i,R}^\nabla:[0,T]\times\mathbb{R}^d\rightarrow\mathbb{R}^d$, that is H\"{o}lder continuous with the exponent $\kappa$ on $[0,T]\times\mathbb{R}^d$ and  coincides with $\nabla c_i$ on $[0,T]\times B_R$. Denote
	$$b_R^*(t,x)\triangleq b(t,x,u^N(t,x,c_{1,R}^\nabla(t,x), c_{2,R}^\nabla(t,x)),v^N(t,x, c_{1,R}^\nabla(t,x), c_{2,R}^\nabla(t,x))).$$
	Under condition (A1)--(A4) it follows from \cite{Veretennikov_strong_1} there exists  a stochastic process 
	$\widehat{Y}^{s,r}_{y,R} $ solving the stochastic differential equation
	$$d\widehat{Y}^{s,r}_{y,R}=b_R^*(t,\widehat{Y}^{s,r}_{y,R})dt+\sigma(t,\widehat{Y}^{s,r}_{y,R})dW_t,\ \ \widehat{Y}^{s,r}_{y,R}=y. $$ 	Put $$\eta^{s,r}_y\triangleq \delta_{u^N(t,c^\nabla_{1,R}(t,\widehat{Y}^{s,r}_{y,R}), c_{2,R}^\nabla(t,\widehat{Y}^{s,r}_{y,R}))}\otimes \delta_{v^N(t,c^\nabla_{1,R}(t,\widehat{Y}^{s,r}_{y,R}),c^\nabla_{2,R}(t,\widehat{Y}^{s,r}_{y,R}))}.$$ 
	We have that, for any $\phi\in \mathcal{D}$,
	\begin{equation}\label{martingale:2nd_order}
	\phi(\widehat{Y}^{s,r}_{y,R}(t))-\int_{s}^t\int_{U\times V}\Lambda_\tau[u,v]\phi(\widehat{Y}^{s,r}_{y,R}(t))\eta^{s,r}_{y,R}(\tau,d(u,v))d\tau 
	\end{equation} is a $\{\widehat{\mathcal{F}}^{s,r}_t\}_{t\in [s,r]}$-martingale. 
	Recall that for the considered case the generator $\Lambda$ is given by (\ref{intro:generator_2nd_specific}).
	
	If $\varrho\in [0, R]$, then denote by $\Theta^{s,r}_{\varrho,R}$  the exit time of $\widehat{Y}^{s,r}_{y,R}$ from $[s,r]\times B_\varrho$. Clearly, $\Theta^{s,r}_{\varrho,R}=\Theta_{\varrho,\varrho}$ and, for $t\in [s,\Theta^{s,r}_{\varrho,R}]$, 
	$$\widehat{Y}^{s,r}_{y,R}=\widehat{Y}^{s,r}_{y,\varrho}. $$ Further, let  $\widehat{Y}^{s,r}_y$ and $\eta^{s,r}_y$ be limits of $\widehat{Y}^{s,r}_{y,R}$ and $\eta^{s,r}_{y,R}$ respectively when $R\rightarrow\infty$. Taking the limit in (\ref{martingale:2nd_order}), we get that $(\widehat{\Omega}^{s,r},\widehat{\mathcal{F}}^{s,r},\{\widehat{\mathcal{F}}^{s,r}_t\}_{t\in [s,r]},\widehat{P}^{s,r},\widehat{Y}^{s,r}_{y}, \eta^{s,r}_{y})$ is an admissible control system for the generator $\Lambda$ given by (\ref{intro:generator_2nd_specific}).

	Let $\Theta^{s,r}_\varrho$ be the first exit time  of $\widehat{Y}^{s,r}_{y}$ from $[s,r]\times B_\varrho$. We have that, for $t\in [s,\Theta^{s,r}_\varrho]$ and any $R\geq\varrho$, $\widehat{Y}_{y}^{s,r}=\widehat{Y}^{s,r}_{y,R}$. Additionally, notice that
	$\Theta^{s,r}_\varrho$ converges to $r$ as $\varrho\rightarrow\infty$.
	
	By Ito-Krylov formula \cite[Theorem 2.10.1]{Krylov} we have that
	\begin{equation}\label{equal:expect_c_i_Ito_Krylov}
	\begin{split}
	\widehat{\mathbb{E}}^{s,r}\int_{s}^{\Theta^{s,r}_\varrho} &{}\left(\left[\frac{\partial }{\partial t}+\langle b^*(t,\widehat{Y}^{s,r}_y(t)),\nabla \rangle+\langle G(t,\widehat{Y}^{s,r}_y(t))\nabla,\nabla \right]c_i(t,\widehat{Y}^{s,r}_y(t))\right)dt\\ &=\widehat{\mathbb{E}}^{s,r} c_i(\Theta^{s,r}_\varrho,\widehat{Y}^{s,r}_y(\Theta^{s,r}_\varrho))-c_i(s,y). \end{split}
	\end{equation}
	Here $$b^*(t,x)\triangleq b(t,x,u^N(t,\nabla c_{1}(t,x),\nabla c_{2}(t,x)),v^N(t,\nabla c_{1}(t,x),\nabla c_{2}(t,x)))$$ is the limit of $b_R^*(t,x)$ when $R\rightarrow\infty$.
	
	Taking into account the definition of $\mathcal{H}_i$, we get that
	\begin{equation}\label{equal:H_i_h_i}
	\begin{split}
	\widehat{\mathbb{E}}^{s,r}\int_{s}^{\Theta^{s,r}_\varrho} \Bigl(\Bigl[\frac{\partial }{\partial t}+\langle b^*(t,\widehat{Y}^{s,r}_y(t)),\nabla \rangle+&\langle G(t,\widehat{Y}^{s,r}(t))\nabla,\nabla \Bigr]c_i(t,\widehat{Y}^{s,r}_y(t))\Bigr)dt\\=
	\widehat{\mathbb{E}}^{s,r}\int_{s}^{\Theta^{s,r}_\varrho} \Bigl[\mathcal{H}_i(t,\widehat{Y}^{s,r}_y(t),&\nabla c_1(t,\widehat{Y}^{s,r}_y(t)),\nabla c_2(t,\widehat{Y}^{s,r}_y(t)))\\ +&\langle G(t,\widehat{Y}^{s,r}_y(t))\nabla,\nabla c_i(t,\widehat{Y}^{s,r}_y(t))\rangle\Bigr]dt\\-\widehat{\mathbb{E}}^{s,r}\int_{s}^{\Theta^{s,r}_\varrho}\int_{U\times V} &h_i(t,\widehat{Y}^{s,r}_y(t),u,v)\eta^{s,r}_{y}(t,d(u,v))dt.
	\end{split}
	\end{equation} Since $(c_1,c_2)$ is a solution of (\ref{eq:parabolic_system}) and $\widehat{Y}^{r,s}_y$ has a density, (\ref{equal:expect_c_i_Ito_Krylov}), (\ref{equal:H_i_h_i}), we get
	\begin{equation}\label{equal:expect_c_i_outcome}
	\begin{split}
	\widehat{\mathbb{E}}^{s,r}\Bigl[c_i(\Theta^{s,r}_\varrho,\widehat{Y}^{s,r}_y(\Theta^{s,r}_\varrho))+\int_{s}^{\Theta^{s,r}_\varrho}\int_{U\times V} h_i(t,\widehat{Y}^{s,r}_y(t),u,v)\eta^{s,r}_{y}(t,d(&u,v))dt\Bigr]\\&=c_i(s,y).\end{split}
	\end{equation} 
	Since $\Theta^{s,r}_\varrho\rightarrow r$ as $\varrho\rightarrow\infty$,  we conclude that part (i) of Condition $(\mathcal{C})$ is fulfilled. 
	
	Parts (ii) and (iii) are proved in the same. Let us briefly describe the proof of part (ii). Fix $y\in\rd$ and $v\in V$. As above, we construct the processes $\mu^{s,r}_{y,v}$ and $\overline{Y}^{1,s,r}_{y,v}$ such that
	\begin{itemize}
		\item $\mu_{y,v}^{s,r}=\delta_{u^N(t,\overline{Y}^{1,s,r}_{y,v}(t),\nabla c_1(t,\overline{Y}^{1,s,r}_{y,v}(t)),\nabla c_2(t,\overline{Y}^{1,s,r}_{y,v}(t)))};$
		\item for any $\varphi\in\mathcal{D}$,
		$$\phi(\overline{Y}^{1,s,r}_{y,v}(t))-\int_{s}^t\int_U\Lambda_\tau[u,v]\phi(\overline{Y}^{1,s,r}_{y,v}(\tau))\mu_{y,v}^{s,r}(\tau,du) $$ is a $\{\widehat{\mathcal{F}}^{s,r}_t\}$-martingale.
	\end{itemize} 
	
	Let $\Theta^{1,s,r}_{y,v,\varrho}$ be the first exit time of $\overline{Y}^{1,s,r}_{y,v}$ from $[s,r]\times B_\varrho $. 	As above we use the Ito-Krylov formula \cite[Theorem 2.10.1]{Krylov} and get
	\begin{equation}\label{equal:expect_c_2_Ito_Krylov}
	\begin{split}
	\widehat{\mathbb{E}}^{s,r}\int_{s}^{\Theta^{s,r}_\varrho} &{}\left(\left[\frac{\partial }{\partial t}+\langle b^1_v(t,\widehat{Y}^{1,s,r}_y(t)),\nabla \rangle+\langle G(t,\widehat{Y}^{1,s,r}_y(t))\nabla,\nabla \right]c_2(t,\widehat{Y}^{s,r}_y(t))\right)dt\\ &=\widehat{\mathbb{E}}^{s,r} c_2(\Theta^{1,s,r}_\varrho,\widehat{Y}^{s,r}_y(\Theta^{1,s,r}_\varrho))-c_2(s,y).  \end{split}
	\end{equation} Here we denote $b^1_v(t,x)\triangleq b(t,x,u^N(t,x,\nabla c_1(t,x),\nabla c_2(t,x)),v)$. By condition (A4) using the same arguments as above, we get that
	\begin{equation*}\label{equal:expect_c_2_outcome}
	c_2(t,y)\geq \widehat{\mathbb{E}}^{s,r}\left[c_2(t,\overline{Y}^{s,r}_{y,v}(\Theta^{1,s,r}_\varrho))+\int_{s}^{\Theta^{1,s,r}_\varrho}\int_{U} h_2(t,\overline{Y}^{s,r}_{y,v}(t),u,v)\mu^{s,r}_{y,v}(t,du)dt\right].
	\end{equation*} 
	This proves part (ii) of Condition $(\mathcal{C})$.
\end{proof}

The following example is an illustration of Theorem \ref{th:link_strong}.
\begin{example}\normalfont Let $d=1$, $f(t,x,u,v)=\tilde{f}_1(t,x)u+\tilde{f}_2(t,x)v+\tilde{f}_3(t,x)$, $u,v\in [-1,1]$. Choose $h_1(t,x,u,v)\triangleq -\delta u^2/2$, $h_2(t,x,u,v)\triangleq -\delta v^2/2$. Finally, set $\sigma(t,x)\triangleq \delta$. In this case $u^N(t,x,p_1,p_2)=\lceil p_1 \tilde{f}_1(t,x)/\delta\rceil$, $v^N(t,x,p_1,p_2)=\lceil p_2 \tilde{f}_2(t,x)/\delta \rceil$, where we denote $$\lceil a \rceil \triangleq \left\{\begin{array}{cc}
	a, & |a|\leq 1 \\
	1, & a\geq 1, \\
	-1, & a\leq -1.
	\end{array}\right. $$ Thus,  (\ref{eq:parabolic_system}) takes the form
	\begin{equation}\label{eq:parabolic_example_1}
	\begin{split}
	\frac{\partial c_i}{\partial t}+\frac{\partial c_i}{\partial x}\Bigl(\Bigl\lceil \frac{\tilde{f}_1(t,x)}{\delta}\frac{\partial c_1}{\partial x} &{}\Bigr\rceil +\Bigl\lceil\frac{\tilde{f}_2(t,x)}{\delta}\frac{\partial c_2}{\partial x} \Bigr\rceil+ \tilde{f}_3(t,x) \Bigr)\\ &-\frac{\delta}{2}\Bigl\lceil\frac{\tilde{f}_i(t,x)}{\delta}\frac{\partial c_i}{\partial x} \Bigr\rceil^2+\frac{\delta^2}{2}\frac{\partial^2 c_i}{\partial x^2}=0,\ \ i=1,2.
	\end{split}
	\end{equation}
	This system has a strong solution \cite[Theorem 3.1]{Hamadene_Manucci}. Theorem \ref{th:near_Nash} provides that, given a solution of (\ref{eq:parabolic_example_1}) one can construct an approximate equilibrium for the original nonzero-sum differential game. However, system (\ref{eq:parabolic_example_1}) is highly nonlinear even in the simplest cases and can be solved only numerically. 
\end{example}

The following example shows that the class of functions satisfying condition $(\mathcal{C})$ is not limited by the solutions of the system of Hamilton-Jacobi PDEs.

\begin{example}\normalfont Consider the following controlled system 
	$$\dot{x}_1=u,\ \ \dot{x}_2=v,\ \ t\in [0,1], \ \ u,v\in [-1,1]. $$
	Assume that $\gamma_1(x_1,x_2)=\zeta x_1 - x_2$, $\gamma_2(x_1,x_2)=\zeta x_2 - x_1$. Here $\zeta\in (0,1/2)$. Now let us introduce the auxiliary stochastic system. Put $b=f$, $\sigma=\delta I$ (here $I$ stands for the identity matrix), $h_1=h_2=0$. We have that (\ref{eq:parabolic_system}) takes the form
	\begin{equation}\label{eq:system_example_2}
	\frac{\partial c_i}{\partial t}+\frac{\partial c_i}{\partial x_1}\cdot\mathrm{sgn}\Bigl(\frac{\partial c_1}{\partial x_1}\Bigr)+\frac{\partial c_i}{\partial x_2}\cdot\mathrm{sgn}\Bigl(\frac{\partial c_2}{\partial x_2}\Bigr)+\frac{\delta^2}{2}\Bigl(\frac{\partial^2 c_i}{\partial x_1^2}+\frac{\partial^2 c_i}{\partial x_2^2}\Bigr)=0.
	\end{equation}
	At the same time boundary condition (\ref{eq:parabolic_bound_condition})  takes the form
	\begin{equation}\label{eq:boundary_example_2}
	c_1(1,x_1,x_2)=\zeta x_1-x_2,\ \ c_2(1,x_1,x_2)=\zeta x_2-x_1. 
	\end{equation}
	It is easy to check that the functions $c_1^1(t,x_1,x_2)=\zeta x_1-x_2-(1-\zeta)(1-t)$, $c_2^1(t,x_1,x_2)=\zeta x_2-x_1-(1-\zeta)(1-t)$ solve system (\ref{eq:system_example_2}), (\ref{eq:boundary_example_2}).
	
	Now let us consider the smooth functions $c_1^2(t,x_1,x_2)=\zeta x_1-x_2+(1-\zeta)(1-t)$, $c_2^2(t,x_1,x_2)=\zeta x_2-x_1+(1-\zeta)(1-t)$. They do not solve system  (\ref{eq:system_example_2}), (\ref{eq:boundary_example_2}). However, the pair $(c_1^2,c_2^2)$ satisfies condition $(\mathcal{C})$. Indeed, the auxiliary stochastic system in the example is
	\begin{equation}\label{sys:example2}
	dY(t)=(u(t)+v(t))dt+\delta W(t). 
	\end{equation} Here $W(t)$ is the $2$-dimensional Wiener process. Let $s,r\in [0,1]$, $s<r$. Put $(\widehat{\Omega}^{s,r},\widehat{\mathcal{F}}^{s,r},\{\widehat{\mathcal{F}}^{s,r}_t\}_{t\in [s,r]},\widehat{P}^{s,r})$  be the standard probability space carrying the $2$-dimensional Wiener process on $[s,r]$. Further, we assume that $\widehat{P}^{s,r}$ does not depend on initial state $y$ and the probabilities $\overline{P}^{1,s,r}_{y,v}$, $\overline{P}^{2,s,r}_{y,u}$ are equal to $\widehat{P}^{s,r}$.
	
	To show that part (i) of Condition $(\mathcal{C})$ is fulfilled choose $y=(y_1,y_2)$ and pick $u=v=-1$. Thus, the $2$-dimensional process with the components $\widehat{Y}^{s,r}_{1,y}(t)=y_1-(t-s)+W_1(t-s)$, $\widehat{Y}^{s,r}_{2,y}(t)=y_2-(t-s)+W_2(t-s)$ satisfies (\ref{sys:example2}). Here $W_1(t)$, $W_2(t)$ are components of the Wiener processes $W(t)$. We have that
	\begin{equation*}
	\begin{split}
	\widehat{\mathbb{E}}^{s,r}c_i^2(r,\widehat{Y}^{s,r}_{1,y}(r),\widehat{Y}^{s,r}_{2,y}(r))&=\zeta y_i-y_{3-i}+(1-\varepsilon)(r-s)+(1-\varepsilon)(1-r)\\&=\zeta y_i-y_{3-i}+(1-\zeta)(1-s)=c_i^2(s,y_1,y_2). 
	\end{split}
	\end{equation*}
	Hence, part (i) of condition $(\mathcal{C})$ is fulfilled for $(c_1^2,c_2^2)$. Notice that in this case $\eta^{s,r}_y(t)=\delta_{-1}\otimes\delta_{-1}$. Now, let $v$ be an arbitrary element of $[-1,1]$. Pick $u=1$. We have that $\overline{Y}^{1,s,r}_{1,y,v}=y_1+(t-s)+W_1(t-s)$, $\overline{Y}^{1,s,r}_{2,y,v}=y_2+v(t-s)+W_2(t-s)$ satisfy (\ref{sys:example2}). We have that
	\begin{equation*}
	\begin{split}
	\widehat{\mathbb{E}}^{s,r}c_2^2(r,\overline{Y}^{1,s,r}_{1,y,v},\overline{Y}^{1,s,r}_{2,y,v})&=
	\zeta y_2-y_1+\zeta v(r-s)-(r-s)+(1-\zeta)(1-r)\\&=
	\zeta y_2-y_1+(1-\zeta)(1-s)-(r-s)(2-(v+1)\zeta)\\&\geq c_2^2(s,y_1,y_2).
	\end{split}
	\end{equation*} Thus, part (ii) of condition $(\mathcal{C})$ holds. Here $\mu^{s,r}_{y,v}(t)=\delta_{1}$. Analogously, picking $\overline{Y}^{2,s,r}_{1,y,u}=y_1+u(t-s)+W_1(t-s)$, $\overline{Y}^{2,s,r}_{2,y,u}=y_2+(t-s)+W_2(t-s)$, we get that 
	$\widehat{\mathbb{E}}^{s,r}c_1(r,\overline{Y}^{2,s,r}_{1,y,v},\overline{Y}^{2,s,r}_{2,y,v})\geq c_1(s,y_1,y_2)$. This proves part (iii) of condition $(\mathcal{C})$.
	
\end{example} 

\section{Limit of $\varepsilon$-equilibria}\label{sect:limit}

In this section we compare the result presented in the paper with the approach to Nash equilibria based on the framework of punishment strategies (see \cite{Chistyakov}, \cite{Cleimenov}, \cite{Kononenko},~\cite{Tolwinski}). The main purpose of this section is to prove that the limit of $\varepsilon$-Nash equilibria constructed by Theorem \ref{th:near_Nash} provides the Nash value in the class of punishment strategies.

Let $\mathrm{Val}_i$ denote the value function of the zero-sum differential game with the dynamics (\ref{eq:sys_original}) where the $i$-th player wishes to maximize her terminal payoff $\gamma_i(x(T))$ and the other player wishes to minimize this outcome. One can assume that the players use deterministic strategies with memory. In this case the strategy of the first (respectively, second) player is a family of functions from $[t_0,T]$ to $U$ $u_\bullet=\{u_{x(\cdot)}\}$ defined for all $x(\cdot)\in C([t_0,T],\rd)$ such that if $x(t)=y(t)$ for all $t\in [t_0,\theta]$, then 
$u_{x(\cdot)}(t)=u_{y(\cdot)}(t)$. Analogously, a strategy with memory of the second player is a family of functions $v_\bullet=\{v_{x(\cdot)}\}_{x(\cdot)\in C([t_0,T],\rd)}$ satisfying the property: the equality $x(t)=y(t)$ for all $t\in [t_0,\theta]$ implies that $v_{x(\cdot)}(t)=v_{y(\cdot)}(t)$ for $t\in [t_0,\theta]$. It suffices to consider only stepwise strategies. This means that there exists a partition of the time interval $[t_0,T]$ $\Delta=\{t_j\}_{j=0}^m$ such that if $x(t_j)=y(t_j)$, then $u_{x(\cdot)}(t)=u_{y(\cdot)}(t)$ (respectively, $v_{x(\cdot)}(t)=v_{y(\cdot)}(t)$). Denote the set of stepwise strategies with  memory on $[t_0,T]$ of the first (receptively, second) player by $\mathbb{U}[t_0]$ (respectively, $\mathbb{V}[t_0]$). Further, let $\mathcal{U}[t_0]$ (respectively, $\mathcal{V}[t_0]$) denote the set of all measurable functions $u:[t_0,T]\rightarrow U$ (respectively, $v:[t_0,T]\rightarrow V$). If $t_0\in [0,T]$, $x_0\in\rd$, $u_\bullet\in\mathbb{U}[t_0]$, $v\in \mathcal{V}[t_0]$, then denote by $x^1(\cdot,t_0,x_0,u_\bullet,v)$ a solution of the initial value problem:
\begin{equation}\label{eq:f_deterministic}
\frac{d}{dt}x(t)=f^1(t,x(t),u_{x(\cdot)}(t))+f^2(t,x(t),v(t)),\ \ x(t_0)=x_0. 
\end{equation} 
One can prove the existence and uniqueness result for (\ref{eq:f_deterministic}).

Analogously, for $u\in \mathcal{U}[t_0]$, $v_\bullet\in\mathbb{U}[t_0]$, let $x^2(\cdot,t_0,x_0,u,v_\bullet)$ solve the initial value problem:
$$\frac{d}{dt}x(t)=f^1(t,x(t),u(t))+f^2(t,x(t),v_{x(\cdot)}(t)),\ \ x(t_0)=x_0. $$

It follows from \cite{NN_PDG}, \cite{Subb_book} that
$$\mathrm{Val}_1(t_0,x_0)=\sup_{u_\bullet\in\mathbb{U}[t_0]}\inf_{v\in \mathcal{V}[t_0]}\gamma_1(x^1(T,t_0,x_0,u_\bullet,v)), $$
$$\mathrm{Val}_2(t_0,x_0)=\sup_{v_\bullet\in\mathbb{V}[t_0]}\inf_{u\in \mathcal{U}[t_0]}\gamma_2(x^2(T,t_0,x_0,u,v_\bullet)). $$
Moreover, \cite[Theorem 13.3]{Subb_book} implies that, for any  compact $G\subset \rd$, and $\varepsilon>0$, there exists  strategies $u^{G,\varepsilon}_\bullet\in\mathbb{U}[t_0]$, $v^{G,\varepsilon}_\bullet\in\mathbb{V}[t_0]$ such that, for any $x_0\in G$, $t_0\in [0,T]$,
\begin{equation*}
\mathrm{Val}_1(t_0,x_0)\geq \inf_{v\in\mathcal{V}[t_0]}\gamma_1(x^1(T,t_0,x_0,u^{G,\varepsilon}_\bullet,v))-\varepsilon, 
\end{equation*}
\begin{equation*}
\mathrm{Val}_2(t_0,x_0)\geq \inf_{u\in\mathcal{U}[t_0]}\gamma_2(x^2(T,t_0,x_0,u,v^{G,\varepsilon}_\bullet))-\varepsilon. 
\end{equation*}

%  It is well known \cite{Subb_book} that $\mathrm{Val}_i$ solves the following Hamilton-Jacobi PDE
%\begin{equation*}
%\frac{\partial\varphi}{\partial t}+H_i(t,x,\nabla\varphi)=0,\ \ \varphi(T,x)=\gamma_i(x),
%\end{equation*} where $$H_1(t,x,\xi)\triangleq \max_{u\in U}\langle \xi,f_1(t,x,u)\rangle+\min_{v\in V}\langle \xi,f_2(t,x,v)\rangle, $$
%$$H_1(t,x,\xi)\triangleq \min_{u\in U}\langle \xi,f_1(t,x,u)\rangle+\max_{v\in V}\langle \xi,f_2(t,x,v)\rangle. $$

Now, we turn to the nonzero-sum differential game with the dynamics given by~(\ref{eq:sys_original}) and players' payoffs determined by $\gamma_i(x(T))$. We consider Nash equilibria for this game within deterministic memory strategies as defined in  \cite{Cleimenov}, 
\cite{Tolwinski}. For the sake of shortness, we  recall only the characterization of Nash equilibrium payoffs. The precise definition of  equilibrium in the class of deterministic memory strategy and the proof of characterization theorem can be found in \cite{Chistyakov}, \cite{Cleimenov}, 
\cite{Tolwinski}.

\begin{definition}\label{def:Nash_Kleimenov}
	We say that $(a_1,a_2)\in\mathbb{R}^2$ is a Nash equilibrium payoff at $(t_0,x_0)$ if there exists a function $x(\cdot)\in C([t_0,T];\rd)$ solving 
	\begin{equation}\label{incl:original}
	\frac{d}{dt}x(t)=\mathrm{co}\{f_1(t,x(t),u)+f_2(t,x(t),v):u\in U,v\in V\},\ \ x(t_0)=x_0
	\end{equation}
	such that $a_i=\gamma_i(x(T))\geq \mathrm{Val}_i(t,x(t))$ for any $t\in [t_0,T]$.
\end{definition}
We denote the set of all Nash equilibrium payoffs at $(t_0,x_0)$ by $\mathcal{N}(t_0,x_0)$. Each set $\mathcal{N}(t_0,x_0)$ is closed.

\begin{theorem}\label{th:limit}
	For each natural $n$, let $\Lambda^n_t[u,v]$ be a generator of the L\'{e}vy-Khintchine type  \begin{multline*}(\Lambda_t^n[u,v]\phi)(x)\triangleq \frac{1}{2}\langle G^n(t,x,u,v)\nabla,\nabla\rangle\phi(x)+\langle b^n(t,x,u,v),\nabla\rangle\phi(x)\\+\int_{\rd}[\phi(x+y)-\phi(x)-\langle y,\nabla\phi(x)\rangle\mathbf{1}_{B_1}(y)]\nu^n(t,x,u,v,dy). \end{multline*} Additionally, let $h_i^n$, $i=1,2$, $n\in\mathbb{N}$, be a function from $[0,T]\times \rd\times U\times V$ to $\mathbb{R}$. Assume that $\Lambda^n_t[u,v]$ and $h_i^n$ satisfy conditions (L1)--(L8) for $\delta=\delta^n$. Further, let $(c_1^n,c_2^n)$ satisfy boundary condition $c_i^n(T,x)=\gamma_i(x)$ and Condition $(\mathcal{C})$ with the generator $\Lambda^n_t[u,v]$ and running payoffs $h_i^n$. If $$\delta^n\rightarrow 0,\ \ (c_1^n(t_0,x_0),c_2^n(t_0,x_0))\rightarrow (a_1,a_2)\text{ as }n\rightarrow\infty,$$
	then $(a_1,a_2)\in\mathrm{co}\mathcal{N}(t_0,x_0)$.
\end{theorem}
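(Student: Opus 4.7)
The strategy is to extract a narrowly convergent subsequence of the approximate equilibria produced by Theorem~\ref{th:near_Nash}, pass to the limit of the induced distributions on path space, and verify that the limit trajectories conform to the characterization in Definition~\ref{def:Nash_Kleimenov}. For each $n$, Theorem~\ref{th:near_Nash} yields a public-signal correlated $\varepsilon_n$-equilibrium $\mathfrak{w}^{*,n}$ at $(t_0,x_0)$ with $\varepsilon_n := (RC+T)\delta^n + 1/n \to 0$ and a motion $(X^{*,n},P^{*,n})$ satisfying $|\mathbb{E}^{*,n}\gamma_i(X^{*,n}(T)) - c_i^n(t_0,x_0)| \leq \varepsilon_n$. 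Since $\|f\| \leq M$, every sample path of $X^{*,n}$ is $M$-Lipschitz and starts at $x_0$, so the laws $\chi^n := P^{*,n}\circ (X^{*,n})^{-1}$ on $C([t_0,T];\mathbb{R}^d)$ are uniformly tight by Arzelà--Ascoli. Extract a subsequence (not relabelled) with $\chi^n \to \chi^*$ narrowly. Continuity of $\gamma_i$ yields $a_i = \int \gamma_i(x(T))\,\chi^*(dx(\cdot))$, and the standard closedness of the set of solutions of (\ref{incl:original}) among $M$-Lipschitz paths (using compactness of $U\times V$ and continuity of $f_1$, $f_2$) shows that $\chi^*$ is concentrated on solutions of the inclusion (\ref{incl:original}) with $x(t_0) = x_0$.

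To obtain the Nash-type inequality I invoke Subbotin's universal punishment strategies: for any $\eta>0$ and compact $G$ containing the reachable set, \cite[Theorem~13.3]{Subb_book} supplies $u^{G,\eta}_\bullet \in \mathbb{U}[s]$ and $v^{G,\eta}_\bullet \in \mathbb{V}[s]$ with $\gamma_1(x^1(T,s,y,u^{G,\eta}_\bullet,v))\geq \mathrm{Val}_1(s,y)-\eta$ and $\gamma_2(x^2(T,s,y,u,v^{G,\eta}_\bullet))\geq \mathrm{Val}_2(s,y)-\eta$ uniformly on $[0,T]\times G$, $u\in\mathcal{U}[s]$, $v\in\mathcal{V}[s]$. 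For each $s\in[t_0,T]$ and each event $E\in\mathcal{F}_s^{*,n}$, build a unilateral deviation by player~1 that replicates $\mathfrak{w}^{*,n}$ on $[t_0,s]$ and, on $E$, switches to $u^{G,\eta}_\bullet$ on $[s,T]$ (exploiting the auxiliary space $\Omega'$ allowed in Definition~\ref{def:deviation}); since player~2's inherited control is a measurable function of the signal realization $\omega$, the Subbotin bound applies pathwise in $\omega$, and the $\varepsilon_n$-equilibrium property gives
\[
\mathbb{E}^{*,n}[\mathbf{1}_E\,\gamma_1(X^{*,n}(T))] \geq \mathbb{E}^{*,n}[\mathbf{1}_E\,\mathrm{Val}_1(s,X^{*,n}(s))] - \varepsilon_n - \eta,
\]
together with the analogous bound for player~2. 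Continuity of $\mathrm{Val}_i$ on compacts (a consequence of (L5)--(L7) and the classical regularity of zero-sum values under Lipschitz dynamics) then allows passage to the limit in $n$ and $\eta$, yielding, for every bounded continuous $\phi$ on $C([t_0,s];\mathbb{R}^d)$,
\[
\int \phi(x_{[t_0,s]})\,\gamma_i(x(T))\,\chi^*(dx(\cdot)) \geq \int \phi(x_{[t_0,s]})\,\mathrm{Val}_i(s,x(s))\,\chi^*(dx(\cdot)).
\]

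The main obstacle is to upgrade this integral inequality to the pointwise bound $\gamma_i(x(T)) \geq \mathrm{Val}_i(s,x(s))$ $\chi^*$-a.s.\ for all $s$. The plan is to exploit condition~(L8): the total quadratic variation of the auxiliary signal over $[t_0,T]$ is at most $T(\delta^n)^2 \to 0$ and $\|f - g^n\|^2 \leq 2(\delta^n)^2 \to 0$, so the signal concentrates on its deterministic drift, and because the stepwise profile $\mathfrak{w}^{*,n}$ determines $X^{*,n}$ uniquely from the signal through successive solution of the ODE (\ref{eq:sys_original}), $X^{*,n}$ converges in probability to a deterministic trajectory. Hence $\chi^*$ reduces to a Dirac measure on a single path (or, more generally, to a convex combination of Diracs on deterministic solutions of (\ref{incl:original}) if different extracted subsequences are combined), which collapses the integral inequality to the desired pointwise one on a countable dense set of $s$; continuity of $\mathrm{Val}_i$ and Lipschitzness of the limiting paths extend it to all $s\in[t_0,T]$. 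Thus $(\gamma_1(x(T)),\gamma_2(x(T)))\in \mathcal{N}(t_0,x_0)$ for $\chi^*$-a.e.\ $x(\cdot)$, and since $\mathcal{N}(t_0,x_0) \subset \mathbb{R}^2$ is bounded and closed its convex hull is closed, giving
\[
(a_1,a_2) = \int (\gamma_1(x(T)),\gamma_2(x(T)))\,\chi^*(dx(\cdot)) \in \mathrm{co}\,\mathcal{N}(t_0,x_0),
\]
which is the conclusion.
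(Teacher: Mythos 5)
Your opening steps coincide with the paper's: tightness of the laws $\chi^n$ of $X^{*,n}$ via the uniform $M$-Lipschitz bound, extraction of a narrow limit $\chi^*$ supported on solutions of (\ref{incl:original}), and the identification $a_i=\int\gamma_i(x(T))\,\chi^*(d(x(\cdot)))$. The genuine gap is in your final step. By triggering the punishment deviation only on events $E\in\mathcal{F}^{*,n}_s$, the strongest statement you can extract in the limit is the conditional inequality $\mathbb{E}_{\chi^*}\bigl[\gamma_i(x(T))\,\big|\,x_{[t_0,s]}\bigr]\geq \mathrm{Val}_i(s,x(s))$, which is strictly weaker than the pointwise bound $\gamma_i(x(T))\geq\mathrm{Val}_i(s,x(s))$ that Definition~\ref{def:Nash_Kleimenov} requires $\chi^*$-a.s.: two continuations of the same initial segment, one overshooting and one undershooting $\mathrm{Val}_i$, satisfy the averaged inequality while neither is an equilibrium path. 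Your proposed repair --- that (L8) forces the signal, hence $X^{*,n}$, to concentrate on a deterministic trajectory so that $\chi^*$ degenerates to (a mixture of) Dirac masses --- fails. The controls $u_\natural,u^\natural,v_\natural,v^\natural$ entering $\mathfrak{w}^*$ are argmin/argmax selections and are discontinuous in the model state $y$, so arbitrarily small noise in $Y^0$ can flip the realized control and produce macroscopically different trajectories with non-vanishing probability; this is exactly why the theorem asserts membership in $\mathrm{co}\,\mathcal{N}(t_0,x_0)$ rather than in the closed set $\mathcal{N}(t_0,x_0)$ itself, which is what a Dirac limit would give. Moreover, even granting atomicity, atoms sharing the same restriction to $[t_0,s]$ cannot be separated by test functions $\phi(x_{[t_0,s]})$, so the conditional inequality still would not collapse to a pointwise one.

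The paper closes this step differently: for each fixed $\tau$ it conditions the switch to the punishment strategy $u^{G(\tau),\rho}_\bullet$ on the \emph{payoff-dependent} event $\Xi^n_\tau=\{\mathrm{Val}_1(\tau,X^n(\tau))\geq\gamma_1(X^n(T))\}$, i.e.\ precisely where the pointwise inequality fails. If $\int[\mathrm{Val}_1(\tau,x(\tau))-\gamma_1(x(T))]^+\,\chi^n(d(x(\cdot)))\geq 2\rho$ with $\varepsilon^n<\rho$, this deviation improves player~1's payoff by more than $\varepsilon^n$, contradicting the approximate equilibrium property; hence $\int[\mathrm{Val}_i(\tau,x(\tau))-\gamma_i(x(T))]^+\,\chi(d(x(\cdot)))=0$ for each $\tau$, which is the pointwise bound $\chi$-a.s., and a partition argument using the Lipschitz continuity of $\mathrm{Val}_i$ along $M$-Lipschitz paths upgrades it to all $t$ simultaneously. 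To repair your argument you must test against this terminal-payoff-dependent event (accepting the enlarged class of deviations the paper implicitly uses) rather than against $\mathcal{F}^{*,n}_s$-measurable events.
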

\begin{proof}
	Denote $\varepsilon^n\triangleq 2(RC+T)\delta^n$.  By Theorem \ref{th:near_Nash}, for each~$n$, there exists a public-signal correlated  profile of strategies $\mathfrak{w}^n=(\Omega^n,\mathcal{F}^n,\{\mathcal{F}^n\}_{t\in [t_0,T]},u^n_{x(\cdot)},v^n_{x(\cdot)}, P^n_{x(\cdot)})$ that is $\varepsilon^n$-equilibrium at $(t_0,x_0)$. Let $X^n(\cdot)$ and $P^n$ be generated by $\mathfrak{w}^n$ and initial position $(t_0,x_0)$. 
	
	Let us introduce the probability $\chi^n$ on $C([t_0,T];\rd)$ by the following rule:
	if $\mathcal{A}$ is a Borel subset of $C([t_0,T];\rd)$, then
	$$\chi^n(\mathcal{A})\triangleq P^n\{\omega\in\Omega^n:X^n(\cdot,\omega)\in \mathcal{A}\}. $$
	
	Further, let $\mathcal{S}(t_0,x_0)$ denote the set of solution of (\ref{incl:original}).
	Notice that
	\begin{itemize}
		\item $\mathrm{supp}(\chi^n)\subset \mathcal{S}(t_0,x_0)$;
		\item any $x(\cdot)$ in $\mathcal{S}(t_0,x_0)$ is Lipschitz continuous with the constant $M$ and satisfies the initial condition $x(t_0)=x_0$.
	\end{itemize}   Thus, by \cite{Billingsley} we get that $\{\chi^n\}$ is relatively compact with respect to the narrow convergence. Without loss of generality, we can assume that there exists a probability on $C([t_0,T];\rd)$ $\chi$ such that
	$\{\chi^n\}_{n=1}^\infty$ converges narrowly to $\chi$. Note that each motion $x(\cdot)$ from $\mathrm{supp}(\chi)$ satisfies (\ref{incl:original}).
	
	Further, by construction of $\chi^n$ we have that
	\begin{equation}\label{equal:E_n_chi_n}
	\mathbb{E}^n\gamma_i(X^n(T))=\int_{C([t_0,T];\rd)}\gamma_i(x(T))\chi^n(d(x(\cdot))).
	\end{equation} Here $\mathbb{E}^n$ denotes the expectation according to the probability $P^n$.
	By Theorem \ref{th:near_Nash} and (\ref{equal:E_n_chi_n}) we get
	$$\left|\int_{C([t_0,T];\rd)}\gamma_i(x(T))\chi^n(d(x(\cdot)))-c_i^n(t_0,x_0)\right|\leq \varepsilon^n. $$ Passing to the limit when $n\rightarrow\infty$,  we conclude that
	\begin{equation}\label{equal:limit_a}
	\int_{C([t_0,T];\rd)}\gamma_i(x(T))\chi(d(x(\cdot)))=a_i. 
	\end{equation}
	
	For $a\in\mathbb{R}$, let $a^+$ denote $a\vee 0$. Now, we shall prove that 
	\begin{equation}\label{equal:Val_gamma_null}
	\int_{C([t_0,T];\rd)}\int_{t_0}^T[\mathrm{Val}_i(t,x(t))-\gamma_i(x(T))]^+dt \chi(d(x(\cdot)))=0.
	\end{equation}

	First, we claim that, for any $\tau\in [t_0,T]$,
	\begin{equation}\label{equal:Val_tau}
	\int_{C([t_0,T];\rd)}[\mathrm{Val}_i(\tau,x(\tau))-\gamma_i(x(T))]^+\chi(d(x(\cdot)))=0.
	\end{equation}
	Indeed, assume for definiteness  that 
	\begin{equation*}%\label{ineq:int_val}
	\int_{C([t_0,T];\rd)}[\mathrm{Val}_1(\tau,x(\tau))-\gamma_1(x(T))]^+\chi(d(x(\cdot)))=3\rho>0.
	\end{equation*} 
	Since the function $x(\cdot)\mapsto[\mathrm{Val}_1(\tau,x(\tau))-\gamma_1(x(T))]^+$ is continuous, and the probabilities $\chi$, $\chi^n$ are concentrated on the compact $\mathcal{S}(t_0,x_0)$, there exists a number $N_0$ such that, for all $n>N_0$,
	\begin{equation}\label{ineq:int_val_n}
	\int_{C([0,T];\rd)}[\mathrm{Val}_1(\tau,x(\tau))-\gamma_1(x(T))]^+\chi^n(d(x(\cdot)))\geq 2\rho>0.
	\end{equation}  
	Moreover, one can assume that, for $n>N_0$,
	\begin{equation}\label{ineq:choice_n}
	\varepsilon^n<\rho.
	\end{equation}
	Further, notice that 
	\begin{equation*}\begin{split}\int_{C([0,T];\rd)}[\mathrm{Val}_1(\tau,x(\tau))&-\gamma_1(x(T))]^+\chi^n(d(x(\cdot)))\\=&\mathbb{E}^n[\mathrm{Val}_1(\tau,X^n(\tau))-\gamma_i(X^n(T))]^+.\end{split}\end{equation*} Denote by $\Xi^n_{\tau}$ the set of $\omega\in\Omega^n$ such that $$\mathrm{Val}_1(\tau,X^n(\tau,\omega))\geq\gamma_1(X^n(T,\omega)). $$
	
	Let $\mathfrak{w}^{n,\tau,\rho}=(\Omega^{n},\mathcal{F}^{n}, \{\mathcal{F}^{n}_t\}_{t\in [t_0,T]},u^{n,\tau,\rho}_{x(\cdot)},v^{n}_{x(\cdot)}, P^{n}_{x(\cdot)})$ be the deviation by the first player from $\mathfrak{w}^n$ with
	$$u^{n,\tau,\rho}_{x(\cdot)}(t,\omega)\triangleq \left\{\begin{array}{cc}
	u^n_{x(\cdot)}(t,\omega), & t\in [t_0,T], \omega\notin\Xi^n_{\tau}, \\
	& t\in [t_0,\tau), \omega\in\Xi^n_{\tau}, \\
	u^{G(\tau),\rho}_{x(\cdot)}, & t\in [\tau,T], \omega\in \Xi^n_{\tau}.
	\end{array}\right. $$
	Here $G(\tau)=\{x(\tau): x(\cdot)\in\mathcal{S}(t_0,x_0)\},$ and $u^{G(\tau),\rho}_\bullet$ is the  first player's strategy  that is $\rho$-optimal for the zero-sum differential game with the dynamics given by (\ref{eq:sys_original}) and objective function $\gamma_1(x(T))$ at any position from $G(\tau)$.
	
	Let $X^{n,\tau,\rho}(\cdot)$ and $P^{n,\tau,\rho}$ be generated by $\mathfrak{w}^{n,\tau,\rho}$ and $(t_0,x_0)$. Furthermore, let $\mathbb{E}^{n,\tau,\rho}$ be the expectation according to $P^{n,\tau,\rho}$. We have that, for $\omega\in \Omega^n\setminus\Xi^n_\tau$,  $X^{n,\tau,\rho}(\cdot,\omega)=X^n(\cdot,\omega)$. Moreover if $\omega\in\Xi^n_\tau$, then $X^{n,\tau,\rho}(t,\omega)=X^n(t,\omega)$ for $t\in [t_0,\tau]$ and $\gamma_1(X^{n,\tau,\rho}(T,\omega))\geq \mathrm{Val}_1(\tau,X^n(\tau))-\rho$. This and (\ref{ineq:int_val_n}) imply the inequality
	\begin{multline*}
	\mathbb{E}^{n,\tau,\rho}\gamma_1(X^{n,\tau,\rho}(T))= \mathbb{E}^{n,\tau,\rho}\gamma_1(X^{n,\tau,\rho}(T))\mathbf{1}_{\Omega^n\setminus \Xi^n_\tau}+\mathbb{E}^{n,\tau,\rho}\gamma_1(X^{n,\tau,\rho}(T))\mathbf{1}_{\Xi^n_\tau}\\\geq \mathbb{E}^{n}\gamma_1(X^{n}(T))\mathbf{1}_{\Omega^n\setminus \Xi^n_\tau}+\mathbb{E}^{n}\mathrm{Val}_1(\tau,X^{n}(\tau))\mathbf{1}_{\Xi^n_\tau}- \rho\\\geq \mathbb{E}^n\gamma_1(X^{n}(T))+2\rho-\rho.
	\end{multline*}
	Since $\varepsilon^n<\rho$ (see (\ref{ineq:choice_n})), we get that $\mathfrak{w}^n$ is not a public-signal correlated $\varepsilon^n$-equilibrium. This contradicts with the choice of $\mathfrak{w}^n$. Thus, (\ref{equal:Val_tau}) is fulfilled. 
	
	Further, let $m$ be a natural number, for $k=0,\ldots, m$, and let $\tau_k^m\triangleq t_0+(T-t_0)k/m$. Recall that, for any $x(\cdot)\in\mathcal{S}(t_0,x_0)$,
	$$ \|x(t')-x(t'')\|\leq M|t'-t''|.$$ Additionally, the function $\mathrm{Val}_i$ is Lipschitz continuous. Thus, there exists a constant~$C_1$ such that, for any $x(\cdot)\in\mathcal{S}(t_0,x_0)$, any $t',t''\in [0,T]$,
	$$|\mathrm{Val}_i(t',x(t'))-\mathrm{Val}_i(t'',x(t''))|\leq C_1|t'-t''|. $$ Since $\chi$ is concentrated on $\mathcal{S}(t_0,x_0)$, we have that
	\begin{multline*}
	\int_{C([t_0,T];\rd)}\int_{t_0}^T[\mathrm{Val}_i(t,x(t))-\gamma_i(x(T))]^+dt\chi(d(x(\cdot)))\\=
	\sum_{k=1}^m\int_{C([t_0,T];\rd)}\int_{\tau_{k-1}^m}^{\tau_k^m}
	[\mathrm{Val}_i(t,x(t))-\gamma_i(x(T))]^+dt\chi(d(x(\cdot)))\\
	\leq \sum_{k=1}^m\int_{C([t_0,T];\rd)}\int_{\tau_{k-1}^m}^{\tau_k^m}
	[\mathrm{Val}_i(\tau_{k-1}^m, x(\tau_{k-1}^m))-\gamma_i(x(T))]^+dt\chi(d(x(\cdot)))\\
	+\sum_{k=1}^m\int_{C([t_0,T];\rd)}\int_{\tau_{k-1}^m}^{\tau_k^m}|\mathrm{Val}_i(t,x(t))- \mathrm{Val}_i(\tau_{k-1}^m,x(\tau_{k-1}^m))|dt\chi(d(x(\cdot)))\\=
	\frac{(T-t_0)}{m}\sum_{k=1}^m\int_{C([t_0,T];\rd)}[\mathrm{Val}_i(\tau_{k-1}^m,x(\tau_{k-1}^m))- \gamma_i(x(T))]^+\chi(d(x(\cdot)))\\+C_1\frac{(T-t_0)}{2m}.
	\end{multline*} Using (\ref{equal:Val_tau}), we get that
	\begin{equation*}
	0\leq\int_{C([0,T];\rd)}\int_{t_0}^T[\mathrm{Val}_i(t,x(t))-\gamma_i(x(T))]^+dt\chi(d(x(\cdot))) \leq C_1\frac{(T-t_0)}{2m}.
	\end{equation*} Passing to the limit when $m\rightarrow\infty$, we get (\ref{equal:Val_gamma_null}).
	
	This implies that, for $\chi$-a.e. $x(\cdot)$,
	$(\gamma_1(x(T)),\gamma_2(x(T)))$ is a Nash equilibrium value. Using (\ref{equal:limit_a}), we obtain the conclusion of the Theorem.
\end{proof}

\section{Construction of near equilibrium strategies}\label{sect:construction}
The aim of this section is to construct a profile of public-signal correlated strategies $\mathfrak{w}^*$ for a given initial position $(t_0,x_0)\in [0,T]\times\rd$ and any partition of the time interval $[t_0,T]$. Below (see Sections \ref{sect:extremal} and \ref{sect:proof_of_the_main_result}) we show that $\mathfrak{w}^*$ is an approximate equilibrium at $(t_0,x_0)$. This will prove Theorem \ref{th:near_Nash}.

First, for $t\in [0,T]$, $x,y\in \rd$, let $u_\natural,u^\natural:[0,T]\times \rd\times\rd\rightarrow U$, $v_\natural,v^\natural:[0,T]\times \rd\times\rd\rightarrow V$ be measurable functions satisfying
\begin{equation}\label{intro:u_nat_down}
u_\natural(t,x,y)\in \mathrm{Argmin}\{\langle x-y,f_1(t,x,u) \rangle:u\in U\}, 
\end{equation}
\begin{equation}\label{intro:u_nat_up}u^\natural(t,x,y)\in \mathrm{Argmax}\{\langle x-y,f_1(t,x,u) \rangle:u\in U\}, \end{equation}
\begin{equation}\label{intro:v_nat_down}v_\natural(t,x,y)\in \mathrm{Argmin}\{\langle x-y,f_2(t,x,v) \rangle:v\in V\}, \end{equation}
\begin{equation*}\label{intro:v_nat_up}v^\natural(t,x,y)\in \mathrm{Argmax}\{\langle x-y,f_2(t,x,v) \rangle:v\in V\}. \end{equation*}
The existence of these functions follows from the Kuratowski--Ryll-Nardzewski selection theorem \cite[Theorem 18.13]{Infinite_dimensional_analysis}.

For $\theta\in\mathbb{R}$,  put
\begin{equation}\label{intro:alpha_3}
\tilde{\alpha}(\theta)\triangleq  \frac{4}{3}M\sqrt{1+(M)^2}e^{T/2}\sqrt{|\theta|},
\end{equation}
\begin{equation}\label{intro:epsilon}
%\begin{split}
\epsilon(\theta)\triangleq
2(M)^2\theta+2\tilde{\alpha}(\theta)+2(\alpha(\theta))^2+(KM)^2\theta^2 +(K)^2\tilde{\alpha}(\theta)\theta+(K)^2\theta.%\end{split}
\end{equation}
%Further, let $\beta\triangleq 5+2K.$

Recall that the pair of functions $(c_1,c_2)$ satisfies Condition $(\mathcal{C})$ (see Definition~\ref{def:cond_C}).

Let $(t_0,x_0)\in [0,T]\times\rd$ be an initial position, and let $\Delta=\{t_k\}_{k=0}^n$ be a partition of the time interval $[t_0,T]$. The position $(t_0,x_0)$ and the partition $\Delta$ are parameters for the profile of strategies $\mathfrak{w}^*$ defined below. However, we do not indicate the dependence of the profile of strategies on them assuming that $(t_0,x_0)$ and $\Delta$ are fixed.

Denote by $d(\Delta)$ the fineness of $\Delta$. Let the filtered measurable space $(\widehat{\Omega}^{t_{k-1},t_{k}},\widehat{\mathcal{F}}^{t_{k-1},t_{k}}, \{\widehat{\mathcal{F}}^{t_{k-1},t_{k}}_t\}_{t\in [t_{k-1},t_{k}]})$, families of probabilities $\widehat{P}^{t_{k-1},t_{k}}_y$, $\overline{P}^{1,t_{k-1},t_{k}}_{y,v}$, $\overline{P}^{2,t_{k-1},t_{k}}_{y,u}$ and families of stochastic processes $\widehat{Y}^{t_{k-1},t_{k}}_y$, $\overline{Y}^{1,t_{k-1},t_{k}}_{y,v}$, $\overline{Y}^{2,t_{k-1},t_{k}}_{y,u}$, $\eta^{t_{k-1},t_{k}}_y$, $\mu^{1,t_{k-1},t_{k}}_{y,v}$, $\nu^{2,t_{k-1},t_{k}}_{y,u}$ be chosen according to Condition~$(\mathcal{C})$ for $s=t_{k-1}$, $r=t_k$. Below, $\widehat{\mathbb{E}}^{t_{k-1},t_{k}}_y$, $\overline{\mathbb{E}}^{1,t_{k-1},t_{k}}_{y,v}$, $\overline{\mathbb{E}}^{2,t_{k-1},t_{k}}_{y,u}$ state for the expectations according to the probabilities $\widehat{P}^{t_{k-1},t_{k}}_y$, $\overline{P}^{1,t_{k-1},t_{k}}_{y,v}$, $\overline{P}^{2,t_{k-1},t_{k}}_{y,u}$ respectively.

Denote 
\begin{equation}\label{intro:Psi}
\Psi^k(x,y)\triangleq \widehat{\mathbb{E}}_y^{t_{k-1},t_{k}} \left\|x-\widehat{Y}^{t_{k-1},t_{k}}_y(t_{k})\right\|^2. 
\end{equation}

Now let us define the public-signal correlated profile of strategies $\mathfrak{w}^*=(\Omega^*,\mathcal{F}^*,\{\mathcal{F}^*_t\}_{t\in [t_0,T]},u^*_{x(\cdot)},v^*_{x(\cdot)},P^*_{x(\cdot)})$ by the following rules.

First, set $$\Omega^*\triangleq \btimes_{k=1}^n(\widehat{\Omega}^{t_{k-1},t_k})^3,$$ $$ \mathcal{F}^*\triangleq\botimes_{k=1}^n(\widehat{\mathcal{F}}^{t_{k-1},t_k}\otimes\widehat{\mathcal{F}}^{t_{k-1},t_k}\otimes\widehat{\mathcal{F}}^{t_{k-1},t_k}).$$
If $t\in [t_{k-1},t_k]$, then put
\begin{multline*}\mathcal{F}_t^*\triangleq \left[\botimes_{j=1}^{k-1}\left(\widehat{\mathcal{F}}^{t_{j-1},t_j}_{t_j}\otimes \widehat{\mathcal{F}}^{t_{j-1},t_j}_{t_j}\otimes\widehat{\mathcal{F}}^{t_{j-1},t_j}_{t_j}\right)\right]
\otimes \left(\widehat{\mathcal{F}}^{t_{k-1},t_k}_{t}\otimes \widehat{\mathcal{F}}^{t_{k-1},t_k}_{t}\otimes\widehat{\mathcal{F}}^{t_{k-1},t_k}_{t}\right) \\\otimes\left[\botimes_{j=k+1}^n \left(\widehat{\mathcal{F}}^{t_{j-1},t_j}_{t_{j-1}} \otimes \widehat{\mathcal{F}}^{t_{j-1},t_j}_{t_{j-1}} \otimes\widehat{\mathcal{F}}^{t_{j-1},t_j}_{t_{j-1}}\right) \right].\end{multline*}

Notice that the elements of $\Omega^*$ are $3n$-tuples $\omega=(\omega_1^0,\omega_1^1,\omega_1^2,\ldots,\omega_n^0,\omega_n^1,\omega_n^2)$. Informally speaking, $(\omega_1^0,\ldots,\omega_n^0)$ corresponds to the case when both players use the Nash equilibrium strategies; $(\omega_1^1,\ldots,\omega_n^1)$ is used when the second players deviates, whereas $(\omega_1^2,\ldots,\omega_n^2)$ works when the first player changes her strategy.

%Thus, elements of $\Omega^*$ are $3n$-tuples $\omega=(\omega_1^0,\omega_1^1,\omega_1^2,\ldots,\omega_n^0,\omega_n^1,\omega_n^2)$. Roughly speaking $(\omega_1^0,\ldots,\omega_n^0)$ corresponds to the case when both players use the Nash equilibrium strategies; $(\omega_1^0,\ldots,\omega^1_{k+1},\omega^0_{k},\ldots,\omega_n^1)$ are used when the second player deviates from a time belonging to the interval $[t_k,t_{k-1}]$, whereas $(\omega_1^0,\ldots,\omega^0_{k},\omega^2_{k+1},\ldots,\omega_n^2)$ works when the first player first changes her strategy the interval $[t_k,t_{k-1}]$.

To define the probability $P_{x(\cdot)}$, and the processes $u_{x(\cdot)}$, $v_{x(\cdot)}$ let us introduce auxiliary continuous-time stochastic processes $Y^0,Y^1_{x(\cdot)},Y^2_{x(\cdot)}$  by the following rules:
\begin{itemize}
	\item $Y^0(t_0)=Y^1_{x(\cdot)}(t_0)=Y^2_{x(\cdot)}(t_0)\triangleq x_0$;
	\item if $Y^0(t)$, $Y^1_{x(\cdot)}(t)$ and $Y^2_{x(\cdot)}(t)$  are already defined on $[t_0,t_{k-1}]$, then set, for $t\in [t_{k-1},t_k]$, \begin{equation}\label{intro:Y_0}
	Y^0(t)\triangleq \widehat{Y}^{t_{k-1},t_k}_{Y^0(t_{k-1})}(t);
	\end{equation} 
	\item if, for all $j=1,\ldots, k$,
	\begin{equation}\label{ineq:decision}
	\begin{split}
	\Psi^j(x(t_j),Y^0(t_{j-1}))\leq  \|x(t_{j-1})-Y^0&(t_{j-1})\|^2(1+\beta(t_{j}-t_{j-1}))\\&+(4\delta^2+\epsilon(t_{j}-t_{j-1}))
	\cdot(t_{j}-t_{j-1}),
	\end{split}
	\end{equation} then, for $t\in [t_{k-1},t_k]$, put $Y^1_{x(\cdot)}(t)=Y^2_{x(\cdot)}(t)\triangleq Y^0(t)$;
	\item if inequality (\ref{ineq:decision}) violates for some $j=1,\ldots, k$, then define the processes $Y^1_{x(\cdot)}(t)$, $Y^2_{x(\cdot)}(t)$ on $[t_{k-1},t_k]$ by the rule:
	$$Y^1_{x(\cdot)}(t)\triangleq \overline{Y}^{1,t_{k-1},t_k}_{Y^1_{x(\cdot)}(t_{k-1}),v^\natural(t_{k-1},x(t_{k-1}),Y^1_{x(\cdot)}(t_{k-1}))}(t),$$ 
	$$Y^2_{x(\cdot)}(t)\triangleq \overline{Y}^{2,t_{k-1},t_k}_{Y^2_{x(\cdot)}(t_{k-1}),u^\natural(t_{k-1},x(t_{k-1}),Y^2_{x(\cdot)}(t_{k-1}))}(t).$$  
\end{itemize}

For $t\in [t_{k-1},t_{k})$, set
$$u^*_{x(\cdot)}(t)\triangleq u_\natural(t_{k-1},x(t_{k-1}),Y^1_{x(\cdot)}(t_{k-1})), $$
\begin{equation}\label{intro:v_star}
v^*_{x(\cdot)}(t)\triangleq v_\natural(t_{k-1},x(t_{k-1}),Y^2_{x(\cdot)}(t_{k-1})). 
\end{equation}

To complete the definition of the profile of strategies $\mathfrak{w}^*$ it remains to introduce the probability $P^*$. To this end,  we let us define the  sequences of auxiliary $\sigma$-algebra $\{\mathcal{G}_k\}_{k=0}^n$ and auxiliary probabilities $\{P_{k,x_{(\cdot)}}\}_{k=0}^n$ by the following rules.   

Put
\begin{equation*}\mathcal{G}_{k}\triangleq \left[\botimes_{j=1}^{k}(\widehat{\mathcal{F}}^{t_{j-1},t_j}_{t_j}\otimes \widehat{\mathcal{F}}^{t_{j-1},t_j}_{t_j}\otimes\widehat{\mathcal{F}}^{t_{j-1},t_j}_{t_j})\right]
\\ \otimes\left\{\varnothing,\Gamma_{k} \right\},\end{equation*}
where
$$\Gamma_{k}\triangleq \btimes_{j=k+1}^n(\widehat{\Omega}^{t_{j-1},t_j})^3. $$ Formally, we assume that $\mathcal{G}_0=\{\varnothing,\Omega^*\}$. 

One can check that the random variables $Y^0(t_k)$ and $Y^i_{x(\cdot)}(t_k)$, $i=1,2$, are $\mathcal{G}_k$ measurable. Moreover,
$\mathcal{G}_k\subset\mathcal{F}^*_{t_{k}}$ and $\mathcal{G}_n=\mathcal{F}^*_{T}=\mathcal{F}^*$

The probability $P_{0,x(\cdot)}$ is  defined on $\mathcal{G}_0$ in the trivial way.

Further, assume that $P_{k-1,x(\cdot)}$  is already defined. Note that there exists a function  $p_{k,x(\cdot)}:(\widehat{\mathcal{F}}^{t_{k-1},t_k}\otimes\widehat{\mathcal{F}}^{t_{k-1},t_k}\otimes\widehat{
	\mathcal{F}}^{t_{k-1},t_k})\times \Omega^*\rightarrow [0,1]$ satisfying by the following properties:
\begin{itemize}
	\item for any $\omega\in \Omega^*$,  $p_{k,x(\cdot)}(\cdot,\omega)$ is a probability on $\widehat{\mathcal{F}}^{t_{k-1},t_k}\otimes\widehat{\mathcal{F}}^{t_{k-1},t_k}\otimes\widehat{\mathcal{F}}^{t_{k-1},t_k}$;
	\item for any $B\in \widehat{\mathcal{F}}^{t_{k-1},t_k}\otimes\widehat{\mathcal{F}}^{t_{k-1},t_k}\otimes\widehat{
		\mathcal{F}}^{t_{k-1},t_k}$, the function $\omega\mapsto p_{k,x(\cdot)}(B,\omega)$ is $\mathcal{G}_{k-1}$-measurable;
	\item if $B^0,B^1,B^2\in\widehat{\mathcal{F}}^{t_{k-1},t_k}$, $\omega\in\Omega^*$, then \begin{equation*}\begin{split}
	p_{k,x(\cdot)}(B^0\times B^1\times &B^2,\omega)\\=&\widehat{P}_{Y^0(t_{k-1},\omega)}^{t_{k-1},t_{k}}(B^0)
	\widehat{P}_{Y^1(t_{k-1},\omega)}^{t_{k-1},t_{k}}(B^1) \widehat{P}_{Y^2(t_{k-1},\omega)}^{t_{k-1},t_{k}}(B^2)
	\end{split}
	\end{equation*} in the case when (\ref{ineq:decision}) is fulfilled for all $j=1,\ldots,k$ and \begin{equation*}\begin{split}
	p_{k,x(\cdot)}(B^0\times B^1\times B^2,\omega&)\\=\widehat{P}_{Y^0(t_{k-1},\omega)}^{t_{k-1},t_{k}}(B^0) &\cdot\overline{P}_{Y^1(t_{k-1},\omega),v^\natural(t_{k-1},x(t_{k-1}),Y^1(t_{k-1},\omega))}^{1,t_{k-1},t_{k}}(B^1)\\
	&\cdot\overline{P}_{Y^2(t_{k-1},\omega),u^\natural(t_{k-1},x(t_{k-1}),Y^2(t_{k-1},\omega))}^{2,t_{k-1},t_{k}}(B^2)\end{split}
	\end{equation*} when (\ref{ineq:decision}) violates for some $j=1,\ldots,k$.
\end{itemize}

For $$A\in \botimes_{j=1}^{k-1}\left(\widehat{\mathcal{F}}^{t_{j-1},t_j}\otimes\widehat{\mathcal{F}}^{t_{j-1},t_j}\otimes\widehat{
	\mathcal{F}}^{t_{j-1},t_j}\right),\ \ B\in \widehat{\mathcal{F}}^{t_{k-1},t_k}\otimes \widehat{\mathcal{F}}^{t_{k-1},t_k}\otimes\widehat{
	\mathcal{F}}^{t_{k-1},t_k},$$ put
$$P_{k,x(\cdot)}\left(A\times B\times \Gamma_{k}\right)\\\triangleq \int_{A\times  \Gamma_{k-1}}p_{k,x(\cdot)}(B,\omega)P_{k-1,x(\cdot)}(d\omega). $$

$P_{k,x(\cdot)}$ is extended to the whole $\sigma$-algebra $\mathcal{G}_k$ in the standard way.	
It is easy to check that the restriction of $P_{k,x(\cdot)}$ on $\mathcal{G}_{k-1}$ coincides with $P_{k-1,x(\cdot)}$.

To complete the definition of the probability $P_{x(\cdot)}^*$ observe that $\mathcal{G}_n=\mathcal{F}^*$ and set
$$P_{x(\cdot)}^*\triangleq P_{n,x(\cdot)} .$$

Let us clarify the meaning of the processes $Y^0$, $Y_{x(\cdot)}^1$, $Y_{x(\cdot)}^2$. They play the role of models of the game. The process $Y^0$ is used when the both players behave according to $\mathfrak{w}^*$, whereas $Y_{x(\cdot)}^1$ (respectively, $Y^2_{x(\cdot)}$) works when the second (respectively, first) player deviates.

\section{Properties of the models of the game}\label{sect:extremal}
First, let us consider the case when both players form their control according to the profile of strategies $\mathfrak{w}^*$.
Let $X^*(\cdot)$, $P^*$ be generated by $\mathfrak{w}^*$ and $(t_0,x_0)$. Denote by $\mathbb{E}^*$ the expectation according to~$P^*$. For $i=1,2$, let $\mathcal{Y}^{*,i}(t)\triangleq Y^i_{X^*(\cdot)}(t)$. Further, denote by $\eta_k^0$ the stochastic process with values in $\mathrm{rpm}(U\times V)$ defined by the rule:
\begin{equation}\label{intro:eta_0}
\eta_k^0(t)\triangleq \eta^{t_{k-1},t_k}_{Y^0(t_{k-1})}(t). 
\end{equation}

Notice that, for any $\phi\in\mathcal{D},$ the process 
\begin{equation}\label{express:Y_0_martingale}
\phi(Y^0(t))-\int_{t_{k-1}}^t\Lambda_\tau[u,v]\phi(Y^0(\tau))\eta^0_k(\tau,d(u,v))d\tau
\end{equation} is a $\{\mathcal{F}^*_t\}_{t\in [t_{k-1},t_k]}$-martingale.

\begin{lemma}\label{lm:key_estimate}
	The following statements hold true: 
	\begin{enumerate}
		\item $Y^0(t)=\mathcal{Y}^{*,1}(t)=\mathcal{Y}^{*,2}(t)$ for $t\in [t_0,T]$; 
		\item for $k=1,\ldots,n$,
		\begin{equation}\label{ineq:E_star_cond}
		\begin{split}
		\mathbb{E}^*(\|X^*(t_k)-Y^0(t_k)\|^2&|\mathcal{F}^*_{t_{k-1}})\\\leq \|X^*(t_{k-1})-Y^0&(t_{k-1})\|^2(1+\beta(t_k-t_{k-1}))\\+(4\delta^2&+\epsilon(t_k-t_{k-1}))
		\cdot(t_k-t_{k-1}).\end{split}
		\end{equation}
	\end{enumerate}
\end{lemma}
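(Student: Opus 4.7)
The two statements are coupled, and I would prove them by joint induction on $k$. The base case $k=0$ is trivial since all three processes start at $x_0$. For the inductive step, assume $\mathcal{Y}^{*,1}(t) = \mathcal{Y}^{*,2}(t) = Y^0(t)$ on $[t_0, t_{k-1}]$ and that (\ref{ineq:E_star_cond}) holds on every earlier interval. Under this hypothesis the definition (\ref{intro:v_star}) of $\mathfrak{w}^*$ reduces to the constant-in-time controls $u^*_k \triangleq u_\natural(t_{k-1}, X^*(t_{k-1}), Y^0(t_{k-1}))$ and $v^*_k \triangleq v_\natural(t_{k-1}, X^*(t_{k-1}), Y^0(t_{k-1}))$ on $[t_{k-1}, t_k]$, both $\mathcal{F}^*_{t_{k-1}}$-measurable. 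Hence $X^*$ on $[t_{k-1}, t_k]$ solves a deterministic ODE conditionally on $\mathcal{F}^*_{t_{k-1}}$, so $X^*(t_k)$ is itself $\mathcal{F}^*_{t_{k-1}}$-measurable. Since $Y^0$ on this interval coincides with $\widehat{Y}^{t_{k-1},t_k}_{Y^0(t_{k-1})}$, one obtains the identity
$$\mathbb{E}^*\bigl[\|X^*(t_k) - Y^0(t_k)\|^2 \,\big|\, \mathcal{F}^*_{t_{k-1}}\bigr] = \Psi^k(X^*(t_k), Y^0(t_{k-1})).$$
Therefore the estimate (\ref{ineq:E_star_cond}) at step $k$ is literally the instance $j=k$ of (\ref{ineq:decision}) along the realization of $X^*$; together with the induction hypothesis it forces $\mathcal{Y}^{*,1}(t) = \mathcal{Y}^{*,2}(t) = Y^0(t)$ on $[t_{k-1}, t_k]$, closing the induction for part~1. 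It thus suffices to prove (\ref{ineq:E_star_cond}).

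To establish the estimate I would decompose
\begin{equation*}
\|X^*(t_k) - Y^0(t_k)\|^2 = \|X^*(t_{k-1}) - Y^0(t_k)\|^2 + 2\langle X^*(t_k) - X^*(t_{k-1}), X^*(t_{k-1}) - Y^0(t_k)\rangle + \|X^*(t_k) - X^*(t_{k-1})\|^2
\end{equation*}
and apply the martingale identity (\ref{express:Y_0_martingale}) with the quadratic test function $\phi(y) = \|X^*(t_{k-1}) - y\|^2 \in \mathcal{D}$. A direct computation yields $\Lambda_\tau[u,v]\phi(y) = \Sigma(\tau,y,u,v) + 2\langle g(\tau,y,u,v), y - X^*(t_{k-1})\rangle$, so by \ref{cond:delta} the $\Sigma$-contribution is bounded by $\delta^2 h_k$, writing $h_k = t_k - t_{k-1}$. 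The residual $\|X^*(t_k) - X^*(t_{k-1})\|^2$ is bounded by $M^2 h_k^2$ via (L5). For the cross term, expand $X^*(t_k) - X^*(t_{k-1}) = h_k f(t_{k-1}, X^*(t_{k-1}), u^*_k, v^*_k) + r_k$, where $\|r_k\|$ is controlled by $\alpha(h_k) h_k + K M h_k^2$ through (L4) and \ref{cond:lip}; analogously, applying (\ref{express:Y_0_martingale}) to the linear test function $y\mapsto \langle a, y\rangle$ expresses $\mathbb{E}^*[Y^0(t_k) - Y^0(t_{k-1}) \mid \mathcal{F}^*_{t_{k-1}}]$ as an integral of $g$ along $Y^0$.

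The heart of the argument is an extremality cancellation at order $h_k$. After the expansions above, the nontrivial $O(h_k)$ contribution to the conditional expectation is
\begin{equation*}
2h_k\langle X^*(t_{k-1}) - Y^0(t_{k-1}), f(t_{k-1}, X^*(t_{k-1}), u^*_k, v^*_k)\rangle - 2\mathbb{E}^*\Bigl[\int_{t_{k-1}}^{t_k}\!\!\int \langle X^*(t_{k-1}) - Y^0(t_{k-1}), g(\tau, Y^0(\tau), u, v)\rangle \eta^0_k(\tau, d(u,v))\,d\tau \,\Big|\, \mathcal{F}^*_{t_{k-1}}\Bigr].
\end{equation*}
Using $\|f - g\|^2 \leq 2\delta^2$ from \ref{cond:delta}, Lipschitz continuity in $x$ from \ref{cond:lip}, and the time modulus $\alpha$ from (L4), each $g(\tau, Y^0(\tau), u, v)$ may be replaced by $f(t_{k-1}, X^*(t_{k-1}), u, v)$ at the cost of controlled error terms. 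After this substitution, the defining properties (\ref{intro:u_nat_down})--(\ref{intro:v_nat_down}) of $u_\natural$ and $v_\natural$, applied marginally against $\eta^0_k$, yield the inequality $\langle X^*(t_{k-1}) - Y^0(t_{k-1}), f(t_{k-1}, X^*(t_{k-1}), u^*_k, v^*_k)\rangle \leq \int \langle X^*(t_{k-1}) - Y^0(t_{k-1}), f(t_{k-1}, X^*(t_{k-1}), u, v)\rangle \eta^0_k(\tau, d(u,v))$, so the two $O(h_k)$ pieces cancel. The remaining error terms are consolidated via $2\langle a, b\rangle \leq \|a\|^2 + \|b\|^2$ into a $\beta h_k\|X^*(t_{k-1}) - Y^0(t_{k-1})\|^2$ term and a $(4\delta^2 + \epsilon(h_k))h_k$ remainder, matching exactly the coefficients (\ref{intro:beta}) and (\ref{intro:epsilon}). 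The main obstacle is the careful bookkeeping at this final stage: one must verify that the Lipschitz, boundedness, and time-modulus bounds combine into precisely the stated constants rather than something sharper or looser.
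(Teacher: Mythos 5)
Your proposal is correct and follows essentially the same route as the paper's proof: joint induction on $k$, the martingale property (\ref{express:Y_0_martingale}) applied to linear and quadratic test functions to reduce the $Y^0$-increments to integrals of $g$ and $\Sigma$, freezing $f$ and $g$ at $(t_{k-1},X^*(t_{k-1}))$ and $(t_{k-1},Y^0(t_{k-1}))$ via (L4)--(L6) and \ref{cond:delta}, and the extremality of $u_\natural,v_\natural$ from (\ref{intro:u_nat_down}), (\ref{intro:v_nat_down}) to cancel the first-order cross terms. Your algebraic grouping of $\|X^*(t_k)-Y^0(t_k)\|^2$ differs superficially from the paper's, and you make the link between (\ref{ineq:E_star_cond}) and the decision rule (\ref{ineq:decision}) via $\Psi^k$ more explicit, but the substance is identical.
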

\begin{proof}
	The proof is close to the proof of \cite[Lemma 14]{averboukh_SIAM}.

	First, assume that, for $j=0,\ldots,k-1$,
	\begin{equation}\label{equal:Y_t_K_minus}
	Y^0(t_{j})=\mathcal{Y}^{*,1}(t_{j})=\mathcal{Y}^{*,2}(t_{j})
	\end{equation}
	Thus, on each time interval $[t_{k-1},t_k]$ the process $X^*$ is $\mathcal{F}^*_{t_{k-1}}$-measurable. Notice that, for $t\in [t_{k-1},t_k]$,  the following condition holds in the almost sure sense:
	\begin{equation*}\label{eq:x_deterministc}
	\frac{d}{dt}X^*(t)=f(t,X^*(t),u_\natural(t,X^*(t_{k-1}),Y^0(t_{k-1})),v_\natural(t,X^*(t_{k-1}),Y^0(t_{k-1}))).
	\end{equation*} 
	
	We have that, for $t\in [t_{k-1},t_k]$,
	\begin{equation}\label{ineq:x_t_x_s}
	\|X^*(t)-X^*(t_{k-1})\|\leq M(t-t_{k-1})\ \ P^*\text{-a.s}.
	\end{equation}  Assuming that $\delta\leq 1$, using that (\ref{express:Y_0_martingale}) is $\{\mathcal{F}^*_t\}_{t\in[t_{k-1},t_k]}$-martingale for any $\phi\in\mathcal{D}$, and conditions (L5), (L8), one can prove that
	\begin{equation}\label{ineq:Y_t_Y_s}
	{\mathbb{E}}^*(\|Y^0(t)-Y^0(t_{k-1})\|^2|\mathcal{F}^*_{t_{k-1}})\leq \delta^2(t-t_{k-1})+\tilde{\alpha}(t-t_{k-1})\cdot (t-t_{k-1}),
	\end{equation} where $\tilde{\alpha}(\cdot)$ is defined by (\ref{intro:alpha_3}). 
	The proof of (\ref{ineq:Y_t_Y_s}) is similar to the proof of \cite[Lemma~13]{averboukh_SIAM}.
	
	Since $\|X^*(t_k)-Y^{0}(t_k)\|^2=\|(X^*(t_k)-X^*(t_{k-1}))- (Y^{0}(t_{k})-Y^{0}(t_{k-1}))+(X^*(t_{k-1})-Y^0(t_{k-1}))\|^2$, we have that
	\begin{equation}\label{equal:x_0_r_Y_r}
	\begin{split}
	\mathbb{E}^{*}(\|X^*&(t_k)-Y^{0}(t_k)\|^2|\mathcal{F}^*_{t_{k-1}})= \|X^*(t_{k-1})-Y^0(t_{k-1})\|^2\\
	&+	\mathbb{E}^*(\|X^*(t_k)-X^*(t_{k-1})\|^2|\mathcal{F}^*_{t_{k-1}})+
	\mathbb{E}^{*}(\|Y^{0}(t_{k})-Y^{0}(t_{k-1})\|^2|\mathcal{F}^*_{t_{k-1}})\\
	&+ 2\mathbb{E}^*(\langle X^*(t_{k})-X^*(t_{k-1}),X^*(t_{k-1})-Y^0(t_{k-1})\rangle| \mathcal{F}^*_{t_{k-1}})\\ 
	&-2\mathbb{E}^{*}(\langle Y^0(t_{k})-Y^0(t_{k-1}),X^*(t_{k-1})-Y^0(t_{k-1})\rangle|\mathcal{F}^*_{t_{k-1}})\\ &-2\mathbb{E}^{*}(\langle X^*(t_k)-X^*(t_{k-1}),Y^0(t_k)-Y^0(t_{k-1})\rangle|\mathcal{F}^*_{t_{k-1}})\\ 
	\leq
	\|X^*&(t_{k-1})-Y^0(t_{k-1})\|^2\\
	&+ 2\mathbb{E}^*(\langle X^*(t_{k})-X^*(t_{k-1}),X^*(t_{k-1})-Y^0(t_{k-1})\rangle|\mathcal{F}^*_{t_{k-1}})\\ &-2\mathbb{E}^{*}(\langle Y^0(t_{k})-Y^0(t_{k-1}),X^*(t_{k-1})-Y^0(t_{k-1})\rangle|\mathcal{F}^*_{t_{k-1}})\\
	&+2\mathbb{E}^*(\|X^*(t_k)-X^*(t_{k-1})\|^2|\mathcal{F}^*_{t_{k-1}})+
	2\mathbb{E}^{*}(\|Y^{0}(t_{k})-Y^{0}(t_{k-1})\|^2|\mathcal{F}^*_{t_{k-1}}). 
	\end{split}
	\end{equation}
	Since (\ref{express:Y_0_martingale}) is a martingale for any $\phi\in\mathcal{D}$, using the definitions of the generator $\Lambda_t[u,v]$ and the function $g$ (see (\ref{intro:Lambda}) and (\ref{intro:g}) respectively), we get that
	\begin{equation*}
	\begin{split}\mathbb{E}^*(\langle Y^0(t_k)-Y^0(t_{k-1}),X^*&(t_{k-1})-Y^0(t_{k-1})|\mathcal{F}^*_{t_{k-1}})\rangle=\\ \mathbb{E}^*\Bigl(\int_{t_{k-1}}^{t_k}\int_{U\times V}\Lambda_t[u,v] &l_{Y^0(t_{k-1}),X^*(t_{k-1})}(Y^0(t))\eta_k^0(t,d(u,v))dt\Bigr|\mathcal{F}^*_{t_{k-1}}\Bigr)\end{split}
	\end{equation*} Here we denote $$l_{z_1,z_2}(x)\triangleq \langle x-z_1,z_2-z_1\rangle.$$ Since $$\Lambda_t[u,v]l_{z_1,z_2}(x)=\langle g(t,x,u,v),z_2-z_1\rangle,$$ we conclude that
	\begin{equation}\label{equal:Y_0_g}
	\begin{split}\mathbb{E}^*(\langle Y^0(t_k)-Y^0(t_{k-1}),X^*(t_{k-1})-Y^0&(t_{k-1})|\mathcal{F}^*_{t_{k-1}})\rangle=\\
	\mathbb{E}^*\Bigl(\int_{t_{k-1}}^{t_k}\int_{U\times V}\langle g(t,Y^0(t),u,v),X^*(&t_{k-1})-Y^0(t_{k-1})\rangle\eta_k^0(t,d(u,v))dt|\mathcal{F}^*_{t_{k-1}}\Bigr).
	\end{split}
	\end{equation}
	To simplify notation, put
	\begin{equation}\label{intro:u_v_k_minus}
	\hat{u}_k\triangleq u_\natural(s,X^*(t_{k-1}),Y^0(t_{k-1})),\ \ \hat{v}_k\triangleq v_\natural(s,X^*(t_{k-1}),Y^0(t_{k-1})).
	\end{equation} Note that $\hat{u}_k$ and $\hat{v}_k$ are random variable measurable w.r.t. $\mathcal{F}^*_{t_{k-1}}$.
	
	Combining  (\ref{ineq:x_t_x_s}), (\ref{ineq:Y_t_Y_s}), (\ref{equal:x_0_r_Y_r}) and (\ref{equal:Y_0_g}), we get 
	\begin{equation}\label{ineq:key_E_x_0_Y_r}
	\begin{split}
	\mathbb{E}^*(\|X^*&(t_{k})-Y^0(t_k)\|^2|\mathcal{F}^*_{t_{k-1}})\\
	\leq\|X^*&(t_{k-1})-Y^0(t_{k-1})\|^2+2(M)^2(r-s)^2+2\delta^2(r-s)\\
	+2&\tilde{\alpha}(r-s)\cdot(r-s)+
	2\int_s^r\langle f(t,X^*(t),\hat{u}_k,\hat{v}_k),X^*(t_{k-1})-Y^0(t_{k-1})\rangle dt\\- 2&\mathbb{E}^*\Bigl(\int_{t_{k-1}}^{t_k}\langle g(t,Y^0(t),u,v),X^*(t_{k-1})-Y^0(t_{k-1})\rangle\eta_k^0(t,d(u,v))dt|\mathcal{F}^*_{t_{k-1}}\Bigr).
	\end{split}\end{equation} 
	Further, from conditions (L4), (L6) and estimate (\ref{ineq:x_t_x_s}) we conclude that the following inequality is fulfilled $P^*$-a.s.:
	\begin{equation}\label{ineq:f_x_star_y_star}
	\begin{split}
	\langle f(t,X^*&(t),\hat{u}_k,\hat{v}_k),X^*(t_{k-1})-Y^0(t_{k-1})\rangle\\\leq \langle &f(t_{k-1},X^*(t_{k-1}),\hat{u}_k,\hat{v}_k),X^*(t_{k-1})-Y^0(t_{k-1})\rangle\\&+
	\alpha(t_k-t_{k-1})\|X^*(t_{k-1})-Y^0(t_{k-1})\|\\& +K\|X^*(t)-X^*(t_{k-1})\|\cdot\|X^*(t_{k-1})-Y^0(t_{k-1})\|\\ \leq \langle &f(t_{k-1},X^*(t_{k-1}),\hat{u}_k,\hat{v}_k),X^*(t_{k-1})-Y^0(t_{k-1})\rangle\\&+ \frac{1}{2}(\alpha(t_k-t_{k-1}))^2+\frac{(K)^2}{2}\|X^*(t)-X^*(t_{k-1})\|^2\\
	&+ \|X^*(t_{k-1})-Y^0(t_{k-1})\|^2\\ \leq \|&X^*(t_{k-1})-Y^0(t_{k-1})\|^2\\&+ \langle f(t_{k-1},X^*(t_{k-1}),\hat{u}_k,\hat{v}_k),X^*(t_{k-1})-Y^0(t_{k-1})\rangle\\&+ \frac{1}{2}(\alpha(t_k-t_{k-1}))^2+\frac{(KM)^2}{2}(t_k-t_{k-1})^2.
	\end{split}
	\end{equation}
	Analogously, condition (L4), (L6) and inequality (\ref{ineq:Y_t_Y_s}) imply the following estimate for all $u\in U$, $v\in V$,
	\begin{equation}\label{ineq:g_x_star_y_star}
	\begin{split}
	-\mathbb{E}^*\bigl(\langle g(t,&Y^0(t),u,v),X^*(t_{k-1})-Y^0(t_{k-1})\rangle\bigl|\mathcal{F}^*_{t_{k-1}}\bigr)\\ 
	\leq - &\mathbb{E}^*\bigl(\langle g(t_{k-1},Y^0(t_{k-1}),u,v),X^*(t_{k-1})-Y^0(t_{k-1})\rangle\bigl|\mathcal{F}^*_{t_{k-1}}\bigr)\\
	& +\alpha(t-s)\|X^*(t_{k-1})-Y^0(t_{k-1})\|\\ & 
	+K \|Y^0(t)-Y^0(t_{k-1})\|\cdot\|X^*(t_{k-1})-Y^0(t_{k-1})\|  \\ 
	\leq
	\|&X^*(t_{k-1})-Y^0(t_{k-1})\|^2
	\\&- \mathbb{E}^*\bigl(\langle g(t_{k-1},Y^0(t_{k-1}),u,v),X^*(t_{k-1})-Y^0(t_{k-1})\rangle\bigl|\mathcal{F}^*_{t_{k-1}}\bigr) \\ &+\frac{(K)^2}{2}\delta^2(t_k-t_{k-1})+\frac{1}{2}\alpha(t_k-t_{k-1}))^2\\&+ \frac{(K)^2}{2}\tilde{\alpha}(t_k-t_{k-1})\cdot(t_k-t_{k-1}).
	\end{split}
	\end{equation}
	Inequalities (\ref{ineq:key_E_x_0_Y_r}), (\ref{ineq:f_x_star_y_star}), (\ref{ineq:g_x_star_y_star}) yield the estimate
	\begin{equation}\label{ineq:E_x_0_Y_r_prefinal}
	\begin{split}
	\mathbb{E}^*(\|X^*(t_{k})-Y^0(t_k)\|^2|\mathcal{F}^*_{t_{k-1}})\hspace{100pt}&{}\\
	\leq
	\|X^*(t_{k-1})-Y^0(t_{k-1})\|^2(1+4(t_k-t_{k-1}))+ &2\delta^2(t_k-t_{k-1})\\+
	2\int_{t_{k-1}}^{t_k}\langle f(t_{k-1},X^*(t_{k-1}),\hat{u}_k,\hat{v}_k),X^*(&t_{k-1})-Y^0(t_{k-1})\rangle dt-\\
	- 2\mathbb{E}^*\Bigl(\int_{t_{k-1}}^{t_k}\int_{U\times V}\langle g(t_{k-1},Y^0(t_{k-1}),u,&v),\\X^*(t_{k-1})-Y^0(t_{k-1})\rangle&\eta^0_k(t,d(u,v))dt|\mathcal{F}^*_{t_{k-1}}\Bigr)\\+
	\epsilon(t_k-t_{k-1})\cdot (t_k-t_{k-1})\hspace{55pt}&{},\end{split}\end{equation}
	where the function $\epsilon$ is defined by (\ref{intro:epsilon}).
	
	From conditions \ref{cond:lip} and \ref{cond:delta} it follows that, for any $u\in U$, $v\in V$,
	\begin{equation*}
	\begin{split}
	\langle f(t_{k-1}&,X^*(t_{k-1}),\hat{u}_k,\hat{v}_k)- g(t_{k-1},Y^0(t_{k-1}),u,v),X^*(t_{k-1})-Y^0(t_{k-1})\rangle\\ 
	\leq \langle &f(t_{k-1},X^*(t_{k-1}),\hat{u}_k,\hat{v}_k)-g(t_{k-1},X^*(t_{k-1}),u,v),X^*(t_{k-1})-Y^0(t_{k-1})\rangle
	\\&+
	K\|X^*(t_{k-1})-Y^0(t_{k-1})\|^2\\ 
	\leq \langle &f(t_{k-1},X^*(t_{k-1}),\hat{u}_k,\hat{v}_k)-f(t_{k-1},X^*(t_{k-1}),u,v), X^*(t_{k-1})-Y^0(t_{k-1})\rangle\\&+
	\left(K+\frac{1}{2}\right)\|X^*(t_{k-1})-Y^0(t_{k-1})\|^2+\delta^2.
	\end{split}
	\end{equation*}
	Using (\ref{intro:u_nat_down}), (\ref{intro:v_nat_down}) and (\ref{intro:u_v_k_minus}), we get that, for all $u\in U$, $v\in V$,
	$$ \langle f(t_{k-1},X^*(t_{k-1}),\hat{u}_k,\hat{v}_k)-f(t_{k-1},X^*(t_{k-1}),u,v),X^*(t_{k-1})-Y^0(t_{k-1})\rangle\leq 0.$$ 
	This and (\ref{ineq:E_x_0_Y_r_prefinal}) imply
	\begin{multline*}
	\mathbb{E}^*(\|X^*(t_{k})-Y^0(t_k)\|^2|\mathcal{F}^*_{t_{k-1}})\leq
	\|X^*(t_{k-1})-Y^0(t_{k-1})\|^2(1+\beta(t_k-t_{k-1}))\\+4 \delta^2(t_k-t_{k-1})+
	\epsilon(t_k-t_{k-1})\cdot (t_k-t_{k-1})
	\end{multline*} where $\beta$ is defined by (\ref{intro:beta}).
	Hence, assumption (\ref{equal:Y_t_K_minus}) implies (\ref{ineq:E_star_cond}) for given $k=0,\ldots, n$. Thus, by construction of the profile of strategies $\mathfrak{w}^*$ under assumption (\ref{equal:Y_t_K_minus}) the equality $Y^0(t)=\mathcal{Y}^{*,1}(t)=\mathcal{Y}^{*,2}(t)$ holds true for $t\in [t_{k-1},t_k]$.
	
	To complete the proof it suffices to recall that 
	$Y^0(t_0)=\mathcal{Y}^{*,1}(t_0)=\mathcal{Y}^{*,2}(t_0)=X^*(t_0)=x_0$ and use the induction.
\end{proof}

Lemma \ref{lm:key_estimate} and the fact that $X^*(t_0)=Y^0(t_0)=x_0$ immediately imply the following.
\begin{corollary}\label{cor:X_star_T}
	$$E^*\|X^*(T)-Y^0(T)\|^2\leq 4\delta^2 Te^{\beta T}+\epsilon(d(\Delta))Te^{\beta T}. $$ 
\end{corollary}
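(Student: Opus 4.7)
The plan is to obtain the bound by taking total expectations in the one-step estimate from Lemma~\ref{lm:key_estimate} and iterating a discrete Gronwall-type recursion. Write $\Psi_k\triangleq \mathbb{E}^*\|X^*(t_k)-Y^0(t_k)\|^2$. Since $t_k-t_{k-1}\le d(\Delta)$ and the function $\epsilon$ from \eqref{intro:epsilon} is monotonically increasing on $[0,d(\Delta)]$ (it is a polynomial in $\theta$, $\tilde\alpha(\theta)$ and $\alpha(\theta)$ with nonnegative coefficients, vanishing at zero, so on a small enough interval we may bound it by its value at the fineness), we have $\epsilon(t_k-t_{k-1})\le\epsilon(d(\Delta))$. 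Taking total expectations in \eqref{ineq:E_star_cond} therefore yields
\begin{equation*}
\Psi_k\leq \Psi_{k-1}(1+\beta(t_k-t_{k-1}))+(4\delta^2+\epsilon(d(\Delta)))(t_k-t_{k-1})
\end{equation*}
for every $k=1,\ldots,n$.

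Next, I would iterate this recursion starting from $\Psi_0=0$, which holds because $X^*(t_0)=Y^0(t_0)=x_0$ by construction. By induction one obtains
\begin{equation*}
\Psi_n\leq (4\delta^2+\epsilon(d(\Delta)))\sum_{k=1}^n(t_k-t_{k-1})\prod_{j=k+1}^n(1+\beta(t_j-t_{j-1})).
\end{equation*}
Using the elementary inequality $1+x\le e^x$ and the telescoping identity $\sum_{j=k+1}^n(t_j-t_{j-1})\le T$, each product is bounded by $e^{\beta T}$, whence
\begin{equation*}
\Psi_n\leq (4\delta^2+\epsilon(d(\Delta)))e^{\beta T}\sum_{k=1}^n(t_k-t_{k-1})=(4\delta^2+\epsilon(d(\Delta)))Te^{\beta T}.
\end{equation*}
Since $t_n=T$, this is precisely the claimed estimate.

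There is no substantive obstacle here: the only mild point is justifying the monotone replacement $\epsilon(t_k-t_{k-1})\le \epsilon(d(\Delta))$, which follows directly from the explicit form \eqref{intro:epsilon} together with the fact that $\alpha$ and $\tilde\alpha$ tend to zero at the origin. Everything else is a standard discrete Gronwall argument, so the corollary is immediate from Lemma~\ref{lm:key_estimate}.
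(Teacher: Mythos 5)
Your argument is correct and is exactly what the paper intends: the paper states the corollary as an immediate consequence of Lemma~\ref{lm:key_estimate} and $X^*(t_0)=Y^0(t_0)=x_0$, i.e.\ precisely the discrete Gronwall iteration you carry out. The only point worth noting is that bounding $\epsilon(t_k-t_{k-1})$ by $\epsilon(d(\Delta))$ uses monotonicity of the modulus $\alpha$, which is not literally assumed in (L4) but can be arranged without loss of generality by passing to a nondecreasing majorant.
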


Now let $\mathfrak{w}^1$ be an unilateral deviation from $\mathfrak{w}^*$ by the first player. 
This means (see Definition \ref{def:deviation}) that $\mathfrak{w}^1$ is a 6-tuple $(\Omega^1,\mathcal{F}^1,\{\mathcal{F}^1\}_{t\in [t_0,T]},u^1_{x(\cdot)},v^1_{x(\cdot)}, P^1_{x(\cdot)})$ such that, for some   filtered measurable space  $(\Omega',\mathcal{F}',\{\mathcal{F}'\}_{t\in [t_0,T]})$, the following properties hold true:
\begin{itemize}
	\item $\Omega^1=\Omega^*\times\Omega'$;
	\item $\mathcal{F}^1=\mathcal{F}^*\otimes\mathcal{F}'$;
	\item $\mathcal{F}^1_t=\mathcal{F}_t^*\otimes\mathcal{F}'_t$;
	\item for any $x(\cdot)\in C([t_0,T];\rd)$ and any $A\in\mathcal{F}^*$, $P^1_{x(\cdot)}(A\times \Omega')=P^*_{x(\cdot)}(A)$;
	\item for any $x(\cdot)\in C([t_0,T];\rd)$, $t\in [t_0,T]$, $\omega\in\Omega^*$, $\omega'\in\Omega'$, $v_{x(\cdot)}(t,\omega,\omega')=v_{x(\cdot)}^*(t,\omega)$.
\end{itemize}

Let $X^1$ and $P^1$ be generated by $\mathfrak{w}^1$ and $(t_0,x_0)$. Below $\mathbb{E}^1$ stands for the expectation according to $P^1$. Denote 
\begin{equation}\label{intro:Y_2}
\mathcal{Y}^{\natural,2}(t)\triangleq Y_{X^1(\cdot)}^2(t).
\end{equation} For $t\in [t_{k-1},t_k]$, put 
\begin{equation}\label{intro:S_2}
\mathcal{S}^2_k(t)\triangleq \overline{Y}^{2,t_{k-1},t_k}_{\mathcal{Y}^{\natural,2}(t_{k-1}),u^\natural(t_{k-1},X^1(t_{k-1}),\mathcal{Y}^{\natural,2}(t_{k-1}))}(t). 
\end{equation}

\begin{lemma}\label{lm:key_estimate_dev} The following inequality holds true, for $k=1,\ldots,n$,
	\begin{equation*}
	\begin{split}
	E^1(\|X^1(t_k)-\mathcal{S}^2_k(t_k)\|^2|\mathcal{F}^1_{t_{k-1}})\leq \|X^1(t_{k-1})-&\mathcal{Y}^{\natural,2}(t_{k-1})\|^2(1+\beta(t_k-t_{k-1}))\\&+ (4\delta^2+\epsilon(t_k-t_{k-1})) 	 \cdot(t_k-t_{k-1}).
	\end{split}
	\end{equation*}
\end{lemma}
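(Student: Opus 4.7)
The plan is to follow the strategy of Lemma \ref{lm:key_estimate} almost step for step, replacing $Y^0$ by the model process $\mathcal{S}^2_k$ and $X^*$ by $X^1$, and replacing the uncontrolled Nash control pair $(\hat u_k,\hat v_k)$ by the pair consisting of the first player's deviation $u^1_{X^1(\cdot)}(t)$ on the side of the actual trajectory and of the extremal selection $u^\natural$ on the side of the model. First, I would record that by construction of $\mathcal{S}^2_k$ via condition (\ref{cond_C:firts_dev}) of Definition~\ref{def:cond_C} one has $\mathcal{S}^2_k(t_{k-1})=\mathcal{Y}^{\natural,2}(t_{k-1})$, and that the 6-tuple from $\mathfrak{w}^1$ enriched by $\mathcal{S}^2_k$ is an admissible controlled system for $\Lambda_t[u,v]$ with the relaxed control $\delta_{u^\natural(t_{k-1},X^1(t_{k-1}),\mathcal{Y}^{\natural,2}(t_{k-1}))}\otimes\nu^{2,t_{k-1},t_k}$. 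Here $X^1(t_{k-1})$ and $\mathcal{Y}^{\natural,2}(t_{k-1})$ are $\mathcal{F}^1_{t_{k-1}}$-measurable, so the constants $u^\natural$ and $v^*_{X^1(\cdot)}(t)=v_\natural(t_{k-1},X^1(t_{k-1}),\mathcal{Y}^{\natural,2}(t_{k-1}))$ are frozen on $[t_{k-1},t_k]$.

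Next, applying the martingale property of $\phi(\mathcal{S}^2_k(t))-\int_{t_{k-1}}^t\!\!\int_V\Lambda_\tau[u^\natural,v]\phi(\mathcal{S}^2_k(\tau))\nu^{2,t_{k-1},t_k}(\tau,dv)d\tau$ to the quadratic test functions in $\mathcal{D}$ together with condition~\ref{cond:delta}, I would obtain the model-increment bound
\begin{equation*}
\mathbb{E}^1(\|\mathcal{S}^2_k(t)-\mathcal{S}^2_k(t_{k-1})\|^2\mid\mathcal{F}^1_{t_{k-1}})\leq \delta^2(t-t_{k-1})+\tilde\alpha(t-t_{k-1})(t-t_{k-1}),
\end{equation*}
exactly as in (\ref{ineq:Y_t_Y_s}). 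Expanding $\|X^1(t_k)-\mathcal{S}^2_k(t_k)\|^2$ around $X^1(t_{k-1})-\mathcal{Y}^{\natural,2}(t_{k-1})$ and taking conditional expectations reduces the problem to bounding two cross terms, one involving $\int_{t_{k-1}}^{t_k}\!\langle f(t,X^1(t),u^1_{X^1(\cdot)}(t),v_\natural),X^1(t_{k-1})-\mathcal{Y}^{\natural,2}(t_{k-1})\rangle dt$ and the other $-\int_{t_{k-1}}^{t_k}\!\!\int_V\langle g(t,\mathcal{S}^2_k(t),u^\natural,v),X^1(t_{k-1})-\mathcal{Y}^{\natural,2}(t_{k-1})\rangle\nu^{2,t_{k-1},t_k}(t,dv)dt$, the second computed through the martingale property as in (\ref{equal:Y_0_g}).

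Using (L4) and (L6) as in (\ref{ineq:f_x_star_y_star})--(\ref{ineq:g_x_star_y_star}), these terms can be frozen at $t_{k-1}$ with an error absorbed into $\epsilon(t_k-t_{k-1})(t_k-t_{k-1})$. The crucial step, and the only one that differs substantively from Lemma~\ref{lm:key_estimate}, is to show that for every $u^1\in U$ and every $v\in V$,
\begin{equation*}
\langle f(t_{k-1},X^1(t_{k-1}),u^1,v_\natural)-g(t_{k-1},\mathcal{Y}^{\natural,2}(t_{k-1}),u^\natural,v),X^1(t_{k-1})-\mathcal{Y}^{\natural,2}(t_{k-1})\rangle
\end{equation*}
is bounded by $(K+\tfrac12)\|X^1(t_{k-1})-\mathcal{Y}^{\natural,2}(t_{k-1})\|^2+\delta^2$. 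For this I would split the difference through $f(t_{k-1},X^1(t_{k-1}),u^\natural,v)$: the part $f-g$ is controlled by \ref{cond:delta} combined with the Lipschitz continuity \ref{cond:lip}, while the part $f(\cdot,u^1,v_\natural)-f(\cdot,u^\natural,v)$ splits via $f=f_1+f_2$ into $\langle f_1(u^1)-f_1(u^\natural),X^1-\mathcal{Y}^{\natural,2}\rangle+\langle f_2(v_\natural)-f_2(v),X^1-\mathcal{Y}^{\natural,2}\rangle$, both nonpositive by the definitions (\ref{intro:u_nat_up}) of $u^\natural$ as argmax and (\ref{intro:v_nat_down}) of $v_\natural$ as argmin.

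Assembling these estimates with $\beta=5+2K$ gives precisely the recursive bound claimed in the lemma. The main obstacle is the last step: one has to verify that the separability $f=f_1+f_2$ together with the opposite extremal selections $u^\natural$ (argmax on the first player's side, consistent with the model exerting maximal divergence) and $v_\natural$ (argmin on the real trajectory's side, consistent with the actual player punishing the deviator) indeed makes the combined first-order term nonpositive for every admissible $u^1$ and every $v$ charged by $\nu^{2,t_{k-1},t_k}$; once this is checked, the rest of the proof is a routine repetition of the calculation of Lemma \ref{lm:key_estimate}.
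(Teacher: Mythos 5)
Your proposal is correct and follows essentially the same route as the paper's proof: the same expansion of $\|X^1(t_k)-\mathcal{S}^2_k(t_k)\|^2$ around $X^1(t_{k-1})-\mathcal{Y}^{\natural,2}(t_{k-1})$, the same use of the martingale property of the model process and of conditions (L4)--(L6), (L8) to freeze the cross terms at $t_{k-1}$, and the same key sign argument splitting $f=f_1+f_2$ so that the argmax selection $u^\natural$ dominates any deviation $u^1$ while the argmin selection $v_\natural$ is dominated by any $v$, with the $f-g$ discrepancy absorbed via \ref{cond:delta} and \ref{cond:lip}. The step you flag as the main obstacle is exactly the one the paper carries out, and your verification of it is sound.
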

\begin{proof}
	Notice (see (\ref{intro:v_star})) that, for $t\in [t_{k-1},t_k]$, the control of the second player is a random variable  equal to 
	\begin{equation}\label{intro:v_bar_k}
	\bar{v}_k\triangleq v_\natural(t_{k-1},X^1(t_{k-1}),\mathcal{Y}^{\natural,2}(t_{k-1})).
	\end{equation} By construction, $\bar{v}_k$ is a   measurable w.r.t. $\mathcal{F}^1_{t_{k-1}}$. Denote $\tilde{u}(t)\triangleq u^1_{X^1(\cdot)}(t)$. By Definition \ref{def:motion}, the following equality holds $P^1$-a.s.  %for any $\phi\in \mathcal{D}$,
	\begin{equation}\label{express:X_1}
	\frac{d}{dt}X^1(t)=f_1(t,X^1(t),\tilde{u}(t))+f_2(t,X^1(t),\bar{v}_k).
	%\phi(X^1(t))-\int_{t_{k-1}}^tL_\tau[\tilde{u}(\tau),\bar{v}_k]\phi(X^1(\tau))d\tau
	\end{equation}%	is a $\{\mathcal{F}^1_t\}_{t\in [t_{k-1},t_k]}$-martingale.

	Denote 
	\begin{equation}\label{intro:u_bar_k}
	\bar{u}_k\triangleq u^\natural(t_{k-1},X^1(t_{k-1}),\mathcal{Y}^{\natural,2}(t_{k-1})). 
	\end{equation}
	
	Analogously, for $t\in [t_{k-1},t_k]$, put 
	\begin{equation}\label{intro:nu_k}
	\nu_k(t)\triangleq \nu^{t_{k-1},t_k}_{\mathcal{Y}^{\natural,2}(t_{k-1}),u^\natural(t_{k-1},X^1(t_{k-1}),\mathcal{Y}^{\natural,2}(t_{k-1}))}(t). 
	\end{equation}
	
	By Condition $(\mathcal{C})$ we have that, for any $\phi\in\mathcal{D}$,
	\begin{equation}\label{express:Y_dev_2}
	\phi(\mathcal{S}^2_k(t))-\int_{t_{k-1}}^t\int_V\Lambda_\tau[\bar{u}_k,v]\phi(\mathcal{S}^2_k(\tau)) \nu_k(\tau,dv)d\tau
	\end{equation} is $\{\mathcal{F}^1_t\}_{t\in [t_{k-1},t_k]}$-martingale.
	
	We have that
	\begin{equation}\label{ineq:X_1_t_k}
	\|X^1(t)-X^1(t_{k-1})\|\leq M(t-t_{k-1})\ \ P^1\text{-a.s}.
	\end{equation} Further, using conditions (L5), (L8), one can prove that
	\begin{equation}\label{ineq:Y_1_t_k}
	{\mathbb{E}}^1(\|\mathcal{S}^2_k(t)-\mathcal{S}^2_k(t_{k-1})\|^2|\mathcal{F}^1_{t_{k-1}})\leq \delta^2(t-t_{k-1})+\tilde{\alpha}(t-t_{k-1})\cdot (t-t_{k-1}).
	\end{equation} Here $\tilde{\alpha}(\cdot)$ is defined by (\ref{intro:alpha_3}). The proof of this statement is analogous to the proof of \cite[Lemma 13]{averboukh_SIAM}.
	
	In the same way as in the proof of Lemma \ref{lm:key_estimate},  inequalities (\ref{ineq:X_1_t_k}), (\ref{ineq:Y_1_t_k}), equality~(\ref{express:X_1}) and the facts that  (\ref{express:Y_dev_2}) is $\{\mathcal{F}^1_t\}_{t\in [t_{k-1},t_k]}$-martingales for $\phi(x)=\langle a,x\rangle$ and for $\phi(x)=\|x-a\|^2$ yield the estimate
	\begin{equation}\label{ineq:E_X_1_S_2_prefinal}
	\begin{split}
	\mathbb{E}^*(\|X^1(t_{k})-\mathcal{S}^2_k(t_k)\|^2|\mathcal{F}^1_{t_{k-1}})\hspace{40pt}&{}\\\leq
	\|X^1(t_{k-1})-\mathcal{Y}^{\natural,2}(t_{k-1})\|^2(1+4(&t_k-t_{k-1}))+ 2\delta^2(t_k-t_{k-1})\\+
	2\mathbb{E}^1\Bigl( \int_{t_{k-1}}^{t_k} \langle f(t_{k-1},X^1(t_{k-1}),\tilde{u}&(t),\bar{v}_k), X^*(t_{k-1})-\mathcal{Y}^{\natural,2}(t_{k-1})\rangle dt\Bigl|\mathcal{F}^1_{t_{k-1}}\Bigr)\\
	- 2\mathbb{E}^*\Bigl(\int_{t_{k-1}}^{t_k}\int_{V}\langle g(t_{k-1},\mathcal{Y}^{\natural,2}(t_{k-1})&,\bar{u}_k,v),\\
	&X^*(t_{k-1}) -\mathcal{Y}^{\natural,2}_k(t_{k-1})\rangle\nu_k(t,dv)dt\Bigl|\mathcal{F}^1_{t_{k-1}}\Bigr)\\+
	\epsilon(t_k-t_{k-1})\cdot (t_k-t_{k-1}).\hspace{34pt}&{}\end{split}\end{equation}
	In (\ref{ineq:E_X_1_S_2_prefinal}) we use the equality $\mathcal{Y}^{\natural,2}(t_{k-1})=\mathcal{S}^2_k(t_{k-1})$.
	
	Using  (\ref{intro:v_nat_down}), (\ref{intro:u_nat_up}), (\ref{intro:v_bar_k}), (\ref{intro:u_bar_k}) and  conditions \ref{cond:lip}, \ref{cond:delta}, we obtain the following inequality, for any $v\in V$:
	\begin{multline*}
	\langle f(t_{k-1},X^1(t_{k-1}),\tilde{u}(t),\bar{v}_k)- g(t_{k-1},\mathcal{Y}^{\natural,2}_k(t_{k-1}),\bar{u}_k,v),X^*(t_{k-1})-\mathcal{Y}^{\natural,2}_k(t_{k-1})\rangle\\\leq \left(K+\frac{1}{2}\right)\|X^*(t_{k-1})-Y^0(t_{k-1})\|^2+\delta^2.
	\end{multline*} This and (\ref{ineq:E_X_1_S_2_prefinal}) imply the statement of the lemma.
\end{proof}

Let $\Theta$ be a stopping time taking values in $\{t_0,\ldots,t_n\}$ defined by the rule:
$\Theta=t_{l-1},$ if \begin{equation}\label{ineq:decision_X_Y}\begin{split}E^1(\|X^1(t_j)-Y^0(t_j)\||\mathcal{F}^1_{t_{j-1}}&)\\\leq  \|X^1(t_{j-1})-Y^0&(t_{j-1})\|^2(1+\beta(t_{j}-t_{j-1}))\\&+(4\delta^2+\epsilon(t_{j}-t_{j-1}))
\cdot(t_{j}-t_{j-1})\end{split}\end{equation} is fulfilled for all $j=1,\ldots, l-1$ and violates for $j=l$. If (\ref{ineq:decision_X_Y}) is valid for all $j=1,\ldots,n$, then assume that $\Theta=t_n$.

From (\ref{intro:Psi}), (\ref{intro:Y_0}), (\ref{intro:Y_2}) it follows  that, given $k=1,\ldots,n$, and $t\in [t_{k-1},t_k]$, 
\begin{equation}\label{equal:Y_nat_2_Theta}
\mathcal{Y}^{\natural,2}(t)=\left\{\begin{array}{cc}
Y^0(t), & t_{k-1}<\Theta, \\ 
\mathcal{S}^2_k(t), & t_{k-1}\geq \Theta.
\end{array}\right.
\end{equation} This, (\ref{ineq:decision}), Lemma \ref{lm:key_estimate_dev} and equality $X^1(t_0)=\mathcal{Y}^{\natural,2}_k(t_0)=x_0$ $P^1$-a.s. give the following.

\begin{corollary}\label{cor:estimate_X_1_Y_2}
	For any $k=1,\ldots,n$,
	\begin{equation*}
	\begin{split}
	E^1(\|X^1(t_k)-\mathcal{Y}^{\natural,2}(t_k)\|^2|\mathcal{F}^*_{t_{k-1}})\leq \|X^1(t_{k-1}&)-\mathcal{Y}^{\natural,2}(t_{k-1})\|^2(1+\beta(t_k-t_{k-1}))\\&+(4\delta^2+\epsilon(t_k-t_{k-1})) 	 \cdot(t_k-t_{k-1}).
	\end{split}
	\end{equation*}	
	Moreover,
	$$E^1\|X^1(T)-\mathcal{Y}^{\natural,2}(T)\|\leq 4\delta^2 Te^{\beta T}+\epsilon(d(\Delta))Te^{\beta T}. $$
\end{corollary}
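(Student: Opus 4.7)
The plan is to combine Lemma \ref{lm:key_estimate_dev} with the defining property of the stopping time $\Theta$ via a case split on $\mathcal{F}^1_{t_{k-1}}$-measurable events, and then iterate the resulting per-step bound by a discrete Gronwall argument. Fix $k \in \{1,\ldots,n\}$. Since $\Theta$ takes values in the partition grid, the events $\{\Theta \geq t_k\}$ and $\{\Theta \leq t_{k-1}\}$ partition the sample space. Both events are $\mathcal{F}^1_{t_{k-1}}$-measurable, because the defining inequality (\ref{ineq:decision_X_Y}) at step $j$ is an inequality between $\mathcal{F}^1_{t_{j-1}}$-measurable random variables, so the indicator of each event commutes with $E^1(\cdot\,|\,\mathcal{F}^1_{t_{k-1}})$.

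On $\{\Theta \geq t_k\}$, identity (\ref{equal:Y_nat_2_Theta}) forces $\mathcal{Y}^{\natural,2}(t_{k-1}) = Y^0(t_{k-1})$ and $\mathcal{Y}^{\natural,2}(t_k) = Y^0(t_k)$, and by the very definition of $\Theta$ inequality (\ref{ineq:decision_X_Y}) must hold at $j=k$ on this event (otherwise we would have $\Theta \leq t_{k-1}$); rewriting that inequality in terms of $\mathcal{Y}^{\natural,2}$ gives exactly the claimed per-step bound on this event. On $\{\Theta \leq t_{k-1}\}$, identity (\ref{equal:Y_nat_2_Theta}) gives $\mathcal{Y}^{\natural,2}(t) = \mathcal{S}^2_k(t)$ for all $t \in [t_{k-1},t_k]$, and the required bound is precisely the content of Lemma \ref{lm:key_estimate_dev}, since the right-hand side of that lemma is already expressed in terms of $\mathcal{Y}^{\natural,2}(t_{k-1})$. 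Adding the two localized estimates, weighted by the indicators of the complementary events, produces the first assertion of the corollary.

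For the global bound, I take total expectations. Writing $a_k := E^1\|X^1(t_k) - \mathcal{Y}^{\natural,2}(t_k)\|^2$ and $\Delta_k := t_k - t_{k-1}$, the starting value is $a_0 = 0$ since $X^1(t_0) = \mathcal{Y}^{\natural,2}(t_0) = x_0$ almost surely. Using that $\epsilon$ is nondecreasing in its argument so that $\epsilon(\Delta_k) \leq \epsilon(d(\Delta))$, the per-step estimate yields the recursion
\[
a_k \leq a_{k-1}(1 + \beta\Delta_k) + \bigl(4\delta^2 + \epsilon(d(\Delta))\bigr)\Delta_k.
\]
Unrolling this and using $\prod_{j=k+1}^n (1+\beta\Delta_j) \leq e^{\beta T}$ together with $\sum_{k=1}^n \Delta_k = T$ gives $a_n \leq \bigl(4\delta^2 + \epsilon(d(\Delta))\bigr)T e^{\beta T}$, matching the stated global bound.

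The main obstacle is the bookkeeping around the case split: one must verify that the two events together cover the whole sample space, that $\mathcal{Y}^{\natural,2}$ takes the correct values at both endpoints on each event, and that (\ref{ineq:decision_X_Y}) can legitimately be applied after multiplying by the $\mathcal{F}^1_{t_{k-1}}$-measurable indicator $\mathbf{1}_{\{\Theta \geq t_k\}}$. The heavy analytic work — controlling increments of the model process, coupling the deterministic flow against the martingale problem, exploiting the extremal choice of $u_\natural$, $v_\natural$, $u^\natural$ — has already been completed in Lemma \ref{lm:key_estimate_dev} (in strict parallel to Lemma \ref{lm:key_estimate}), so the remaining work is essentially algebraic.
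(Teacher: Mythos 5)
Your proposal is correct and matches the paper's (very terse) argument exactly: the paper likewise obtains the per-step bound by splitting on the $\mathcal{F}^1_{t_{k-1}}$-measurable events $\{t_{k-1}<\Theta\}$ and $\{t_{k-1}\geq\Theta\}$ via (\ref{equal:Y_nat_2_Theta}), invoking the defining inequality of $\Theta$ on the first event and Lemma \ref{lm:key_estimate_dev} on the second, and then iterates from $X^1(t_0)=\mathcal{Y}^{\natural,2}(t_0)=x_0$ by the discrete Gronwall estimate. (The left-hand side of the paper's final display is evidently a typo for $E^1\|X^1(T)-\mathcal{Y}^{\natural,2}(T)\|^2$, cf.\ Corollary \ref{cor:X_star_T}, which is exactly what your iteration produces.)
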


\section{Proof of the main result}\label{sect:proof_of_the_main_result}
In this section we prove that the strategy $\mathfrak{w}^*$ defined in Section \ref{sect:construction} is an approximate public-signal correlated  equilibrium.
\begin{proof}[Proof of Theorem \ref{th:near_Nash}]
	First, let us assume that the players use the profile of strategies~$\mathfrak{w}^*$. Let $X^*$, $P^*$ be generated by $\mathfrak{w}^*$ and initial position $(t_0,x_0)$. Recall that $Y^0$ is the stochastic process defined by (\ref{intro:Y_0}).

	For $t\in [t_{k-1},t_k)$, let $\eta^0(t)$ be equal to $\eta^0_k(t)$.
	Part (i) of Condition $(\mathcal{C})$ implies that, for $i=1,2$,
	$$c_i(t_k,Y^0(t_k))+\int_{t_0}^{t_k}\int_{U\times V}h_i(t,Y^0(t),u,v)\eta^0(t,d(u,v))dt $$ is a $\{\mathcal{F}^*_{t_k}\}_{k=0}^n$-martingale.
	Using this and the boundary condition, we get that
	\begin{equation}\label{equal:c_i_sigma_h}
	c_i(t_0,x_0)=\mathbb{E}^*\left[\gamma_i(Y^0(T))+\int_{t_0}^{T}\int_{U\times V}h_i(t,Y^0(t),u,v)\eta^0(t,d(u,v))dt\right]. 
	\end{equation}	
	The Lipschitz continuity of the functions $\gamma_i$, Corollary 2, and  Jensen's inequality imply 
	\begin{multline}\label{ineq:sigma_X_star_prefinal}
	\mathbb{E}^*|\gamma_i(X^*(T))-\gamma_i(Y^0(T))|\leq R\mathbb{E}^*\|X^*(T)-Y^0(T)\|\\\leq R\sqrt{\mathbb{E}^*\|X^*(T)-Y^0(T)\|^2}\leq R\sqrt{C^2\delta^2+\epsilon(d(\Delta))Te^{\beta T}}.
	\end{multline}
	Here the constant $C$ is defined by (\ref{intro:C}).
	
	Since $|h_i(t,x,u,v)|\leq \delta$, using (\ref{equal:c_i_sigma_h}), (\ref{ineq:sigma_X_star_prefinal}) and Jensen's inequality, we get
	\begin{multline}\label{estimate:sigma_X_c_i}
	|\mathbb{E}^*\gamma_i(X^*(T))-c_i(t_0,x_0)|\\\leq \mathbb{E}^*\left|\gamma_i(X^*(T))- \gamma_i(Y^0(T))-\int_{t_0}^{T}\int_{U\times V}h_i(t,Y^0(t),u,v)\eta^0(t,d(u,v))dt\right|\\ \leq R\sqrt{C^2\delta^2+\epsilon(d(\Delta))Te^{\beta T}}+T\delta.
	\end{multline}

	Now assume that the first player deviates. Let $\mathfrak{w}^1$ be an unilateral deviation of the first player, $X^1$, $P^1$ be generated by $\mathfrak{w}^1$ and initial position $(t_0,x_0)$. Recall that $\mathcal{Y}^{\natural,2}$ and $\mathcal{S}^2_k$ are defined by~(\ref{intro:Y_2}),~(\ref{intro:S_2}) respectively. Let us introduce the generalized control $\eta^2_k$ on $[t_{k-1},t_k)$ by the rule:
	\begin{equation}\label{intro:eta_2}
	\eta^2_k(t,d(u,v))\triangleq \left\{\begin{array}{cc}
	\eta^0_k(t,d(u,v)), & t_{k-1}<\Theta, \\ 
	\delta_{\bar{u}_k}\otimes \nu_k(t,dv), & t_{k-1}\geq \Theta.
	\end{array}\right. 
	\end{equation}
	Here $\eta^0_k$, $\bar{u}_k$ and $\nu_k$ are defined by (\ref{intro:eta_0}), (\ref{intro:u_bar_k}) and (\ref{intro:nu_k}) respectively. Recall that the stopping time $\Theta$ is equal to $t_{l-1}$ when (\ref{ineq:decision_X_Y}) holds true for all $j=0,\ldots,l-1$ and violates for $j=l$.
	Finally, define the control $\eta^2$ as follows: if $t\in [t_{k-1},t_{k}]$, then
	$$\eta^2(t)\triangleq \eta^2_k(t). $$
	
	We claim that
	\begin{equation}\label{express:c_1_deviation}
	c_1(t_k,\mathcal{Y}^{\natural,2}(t_k))+\int_{t_0}^{t_{k}}\int_{U\times V}h_1(t,\mathcal{Y}^{\natural,2}(t),u,v)\eta^2(t,d(u,v))dt
	\end{equation} is a $\{\mathcal{F}^1_{t_k}\}_{k=0}^n$-supermartingale w.r.t. $P^1$.
	Indeed, it suffices to prove that
	\begin{multline}\label{ineq:c_1_martingale}
	\mathbb{E}^1\Bigl[\Bigl(c_1(t_k,\mathcal{Y}^{\natural,2}(t_k))+ \int^{t_{k}}_{t_{k-1}}\int_{U\times V}h_1(t,\mathcal{Y}^{\natural,2}(t),u,v)\eta^2(t,d(u,v))dt\Bigr)\Bigr| \mathcal{F}^1_{t_{k-1}}\Bigr]\\\leq c_1(t_{k-1},\mathcal{Y}^{\natural,2}(t_{k-1})).
	\end{multline}
	Using (\ref{equal:Y_nat_2_Theta}) and (\ref{intro:eta_2}), we conclude that
	\begin{multline*}
	\mathbb{E}^1\Bigl[\Bigl(c_1(t_k,\mathcal{Y}^{\natural,2}(t_k))+ \int\limits_{t_{k-1}}^{t_{k}}\int_{U\times V}h_1(t,\mathcal{Y}^{\natural,2}(t),u,v)\eta^2(t,d(u,v))dt \Bigr) \Bigr|\mathcal{F}^1_{t_{k-1}}\Bigr]\\=
	\mathbb{E}^1\Bigl[\Bigl(c_1(t_k,\mathcal{Y}^{\natural,2}(t_k))+ \int_{t_{k-1}}^{t_{k}}\int_{U\times V}h_1(t,\mathcal{Y}^{\natural,2}(t),u,v)\eta^2(t,d(u,v))dt \Bigr) \mathbf{1}_{t_{k-1}<\Theta}\Bigr|\mathcal{F}^1_{t_{k-1}}\Bigr]\\
	+\mathbb{E}^1\Bigl[\Bigl(c_1(t_k,\mathcal{Y}^{\natural,2}(t_k))+ \int_{t_{k-1}}^{t_{k}}\int_{U\times V}h_1(t,\mathcal{Y}^{\natural,2}(t),u,v)\eta^2(t,d(u,v))dt\Bigr) \mathbf{1}_{t_{k-1}\geq\Theta}\Bigr|\mathcal{F}^1_{t_{k-1}}\Bigr]\\=
	\mathbb{E}^1\Bigl[\Bigl(c_1(t_k,Y^0(t_k))+ \int_{t_{k-1}}^{t_{k}}\int_{U\times V}h_1(t,Y^0(t),u,v)\eta^0_k(t,d(u,v))dt\Bigr) \mathbf{1}_{t_{k-1}<\Theta}\bigr|\mathcal{F}^1_{t_{k-1}}\Bigr]\\
	+\mathbb{E}^1\Bigl[\Bigl(c_1(t_k,\mathcal{S}^2_k(t_k))+ \int_{t_{k-1}}^{t_{k}}\int_{ V}h_1(t,\mathcal{S}^{2}_k(t),\bar{u}_k,v)\nu_k(t,dv)dt\Bigr) \mathbf{1}_{t_{k-1}\geq\Theta}\Bigr|\mathcal{F}^1_{t_{k-1}}\Bigr].
	\end{multline*}
	Part (i) of Condition $(\mathcal{C})$ and definitions of $Y^0$ (see (\ref{intro:Y_0})) and $\eta^0_k$ (see (\ref{intro:eta_0})) yield that
	\begin{multline*}
	\mathbb{E}^1\Bigl[\Bigl(c_1(t_k,Y^0(t_k))+ \int_{t_{k-1}}^{t_{k}}\int_{U\times V}h_1(t,Y^0(t),u,v)\eta^0_k(t,d(u,v))dt\Bigr) \mathbf{1}_{t_{k-1}<\Theta}\Bigr|\mathcal{F}^1_{t_{k-1}}\Bigr]\\ = c_1(t_{k-1},Y^0(t_{k-1}))\mathbf{1}_{t_{k-1}<\Theta}=
	c_1(t_{k-1},\mathcal{Y}^{\natural,2}(t_{k-1})) \mathbf{1}_{t_{k-1}<\Theta}.
	\end{multline*}
	Further, using part (iii) of Condition $(\mathcal{C})$, definitions of $\mathcal{S}_k^2$ (see (\ref{intro:S_2})) and $\nu_k$ (see (\ref{intro:nu_k})), we get
	\begin{multline*}
	\mathbb{E}^1\Bigl[\Bigl(c_1(t_k,\mathcal{S}^2_k(t_k))+ \int_{t_{k-1}}^{t_{k}}\int_V h_1(t,\mathcal{S}^{2}_k(t),\bar{u}_k,v)\nu_k(t,dv)dt\Bigr) \mathbf{1}_{t_{k-1}\geq\Theta}\Bigr|\mathcal{F}^1_{t_{k-1}}\Bigr]\\ \leq c_1(t_{k-1},\mathcal{Y}^{\natural,2}(t_{k-1})) \mathbf{1}_{t_{k-1}\geq \Theta}.
	\end{multline*}
	Thus, (\ref{ineq:c_1_martingale}) is fulfilled. It implies that the discrete time process (\ref{express:c_1_deviation}) is a $\{\mathcal{F}^1_{t_k}\}_{k=0}^n$-supermartingale.
	Therefore, using boundary condition, we get
	\begin{equation}\label{ineq:c_1_deviation}
	\mathbb{E}^1\left(\gamma_1(\mathcal{Y}^{\natural,2}(T))+ \int_{t_0}^{T}\int_{U\times V}h_1(t,\mathcal{Y}^{\natural,2}(t),u,v)\eta^2(t,d(u,v))dt\right)\leq c_1(t_0,x_0).
	\end{equation}
	By Jensen's inequality, the Lipschitz continuity of the function $\gamma_1$ and Corollary \ref{cor:estimate_X_1_Y_2} we have
	\begin{multline*}
	\mathbb{E}^1\gamma_1(X^1(T))-\mathbb{E}^1\gamma_1(\mathcal{Y}^{\natural,2}(T))\leq \mathbb{E}^1|\gamma_1(X^1(T))-\gamma_1(\mathcal{Y}^{\natural,2}(T))|\\\leq R\mathbb{E}^1\|X^1(T)-\mathcal{Y}^{\natural,2}(T)\|\leq R\sqrt{C^2\delta^2+\epsilon(d(\Delta))Te^{\beta T}}.
	\end{multline*} This, the fact that (\ref{express:c_1_deviation}) is a supermartingale and the estimate $|h_1(t,x,u,v)|\leq\delta$ (see (L8)) imply that
	\begin{equation}\label{ineq:sigma_1_c_1}
	\mathbb{E}^1\gamma_1(X^1(T))\leq c_1(t_0,x_0)+R\sqrt{C^2\delta^2+\epsilon(d(\Delta))Te^{\beta T}}+T\delta.
	\end{equation}
	
	In the same way one can consider the case when the second player deviates. If $\mathfrak{w}^2=(\Omega^2,\mathcal{F}^2,\{\mathcal{F}^2\}_{t\in [t_0,T]},u^2_{x(\cdot)},v^2_{x(\cdot)}, P^2_{x(\cdot)})$ is an unilateral deviation of the second player, $X^2$ and $P^2$ are generated by $\mathfrak{w}^2$ and initial position $(t_0,x_0)$, then \begin{equation}\label{ineq:sigma_2_c_2}
	\mathbb{E}^2\gamma_2(X^2(T))\leq c_2(t_0,x_0)+R\sqrt{C^2\delta^2+\epsilon(d(\Delta))Te^{\beta T}}+T\delta.
	\end{equation} The proof of this property relies on  analogs of Lemma \ref{lm:key_estimate_dev} and Corollary \ref{cor:estimate_X_1_Y_2}.

	Inequalities (\ref{estimate:sigma_X_c_i}), (\ref{ineq:sigma_1_c_1}) and (\ref{ineq:sigma_2_c_2}) imply that $\mathfrak{w}^*$ is a public-signal correlated $\varepsilon$-equilibrium for any $\varepsilon$ such that $$\varepsilon\geq R\sqrt{C^2\delta^2+\epsilon(d(\Delta))Te^{\beta T}}+T\delta. $$ Since we can construct the profile of strategies $\mathfrak{w}^*$ using a partition $\Delta$ with an arbitrary small fineness, the statement of Theorem \ref{th:near_Nash} holds true.
\end{proof}

\section*{Acknowledgments}
I am thankful to the anonymous referees for their valuable and profound comments.
%We would like to acknowledge the assistance of volunteers in putting
%together this example manuscript and supplement.

\bibliography{Diffgames_Nash}
\end{document}